\newtheorem{theorem}{Theorem}[section]
\newtheorem{lemma}[theorem]{Lemma}
\newtheorem{prop}[theorem]{Proposition}
\newtheorem{question}[theorem]{Question}
\theoremstyle{definition}   
 {           
\newtheorem{remark}[theorem]{Remark}

\newtheorem{notation}[theorem]{Notation}
}
\newcommand{\Z}{\mathbb{Z}}
\newcommand{\Q}{\mathbb{Q}}
\newcommand{\R}{\mathbb{R}}
\newcommand{\eps}{\varepsilon}
\def\beq{\begin{equation}}
\def\eeq{\end{equation}}
\def\beqq{\begin{equation*}}
\def\eeqq{\end{equation*}}
\newcommand{\norm}[1]{\left\lVert #1 \right\rVert} 
\newcommand{\abs}[1]{\left\lvert #1 \right\rvert} 
\newcommand{\sk}[1]{\langle #1 \rangle} 
\newcommand{\et}{\end{theorem}}
\newcommand{\el}{\end{lemma}}
\newcommand{\er}{\end{remark}}
\newcommand{\ed}{\end{defi}}
\newcommand{\bp}{\begin{proof}}
\newcommand{\ep}{\end{proof}}
\DeclareMathOperator{\dist}{dist}
\newcommand{\iA}{\mathcal{A}}
\renewcommand{\phi}{\varphi}
\title{Sets of large dimension not containing polynomial configurations}
\author{Andr\'as M\'ath\'e}
\address{Mathematics Institute, University of Warwick, 
Coventry, CV4 7AL, United Kingdom}
\email{A.Mathe@warwick.ac.uk}
\begin{document}
%\maketitle 

\begin{abstract}
The main result of this paper is the following. Given countably many multivariate polynomials with rational coefficients and maximum degree $d$, we construct a compact set $E\subset \R^n$ of Hausdorff dimension $n/d$ which does not contain finite point configurations corresponding to the zero sets of the given polynomials.
%to the given polynomials.

Given a set $E\subset \R^n$, we study the angles determined by three points of $E$. The main result implies the existence of a compact set in $\R^n$ of Hausdorff dimension $n/2$ which does not contain the angle $\pi/2$. (This is known to be sharp if $n$ is even.)  We show that there is a compact set of Hausdorff dimension $n/8$ which does not contain an angle in any given countable set. We also construct a compact set $E\subset \R^n$ of Hausdorff dimension $n/6$ for which the set of angles determined by $E$ is Lebesgue null. 

In the other direction, we present a result that every set of sufficiently large dimension contains an angle $\eps$ close to any given angle.

The main result can also be applied to distance sets.
As a corollary we obtain a compact set $E\subset \R^n$ ($n\ge 2$) of Hausdorff dimension $n/2$ which does not contain rational distances nor collinear points, for which the distance set is Lebesgue null, moreover, every distance and direction is realised only at most once by $E$.
\end{abstract}

\keywords{Hausdorff dimension, angle, distance set, excluded configurations}
\subjclass[2010]{28A78}
\maketitle 
%\insert\footins{\footnotesize{2010 Mathematics Subject Classification: 28A78} \\ } 
%\insert\footins{\footnotesize{Key Words:  angle, distance set, Hausdorff dimension}}

\section{Introduction}

The distance set conjecture asserts that for every analytic set $E\subset \R^n$ ($n\ge 2$) of Hausdorff dimension larger than $n/2$, the set of distances formed by $E$,
$$D(E) = \{\abs{x-y}\,:\,x,y\in E\}\subset \R,$$
has positive Lebesgue measure. The problem was first studied by Falconer \cite{Falc}. He showed that there is a compact set $E\subset \R^n$ of Hausdorff dimension $n/2$ for which $D(E)$ is Lebesgue null. In the other direction, he proved that $\dim_H E>(n+1)/2$ implies that $D(E)$ has positive Lebesgue measure.
According to the current best results, the same holds if
%Current best bound is
$\dim_H E> n/2 +1/3$ (see Wolff \cite{Wolff} and Erdo\u{g}an \cite{Erdogan}).
However, there are sequences of distances $(d_i)$, $d_i\to 0$, and compact sets $E\subset\R^n$ of dimension $n$ such that $d_i\notin D(E)$ for every $i$.

There has been recent interest in studying the set of angles $\iA(E)$ formed by three-point subsets of a given $E\subset\R^n$, see \cite{hetes, Harangi, IMP}. A particularly interesting open question is the following: What minimum dimension of a set $E\subset \R^n$  guarantees that $\iA(E)$ has positive Lebesgue measure?
A simple observation is that results for the distance set conjecture give results for the angle set as well. Therefore, for every analytic set $E\subset \R^n$ with $$\dim_H E>\min(n/2+4/3, \,n-1),$$ $\iA(E)$ has positive Lebesgue measure (Theorem~\ref{angledist}).
In the other direction, we construct a compact set $E\subset\R^n$ of Hausdorff dimension $\max(n/6, \, 1)$ such that $\iA(E)$ is Lebesgue null (Theorem~\ref{n/6}).

The following question was raised by Keleti. How large dimension can a Borel set $E\subset\R^n$ ($n\ge 2$) have such that $\iA(E)$ does not contain a given angle $\alpha$?

It follows from a theorem of Mattila that $\dim_H E>n-1$ implies $\iA(E)=[0,\pi]$, and that $\dim_H E > \lceil \frac{n}{2} \rceil$ implies $\pi/2\in\iA(E)$, see \cite{hetes}.
It is also known that if $E$ is an infinite set, then $E$ contains angles arbitrarily close to $0$ and $\pi$, see \cite{hetes}. It is proved in \cite{hetes}, that (independently of $n$) $\dim_H E>1$ guarantees angles arbitrarily close to $\pi/2$. Also, for some absolute constant $C$,  $\dim_H E > C \delta^{-1} \log \delta^{-1}$ implies that $E$ contains angles in the $\delta$ neighbourhood of $\pi/3$ and $2\pi/3$.

Harangi \cite{Harangi} proved that there is a self-similar set $E\subset\R^n$ of dimension $c_\delta n$ such that all angles formed by $E$ are in the $\delta$ neighbourhood of $\{0,\pi/3, \pi/2, 2\pi/3, \pi\}$, where $c_\delta$ is a constant independent of $n$. He also showed that there exists a compact set $E\subset \R^n$ with $\dim_H E = c\sqrt[3]{n}/\log n$ not containing the angle $\pi/3$ and $2\pi/3$.

The main result of this paper was motivated by the above question of Keleti. Instead of directly constructing sets of large dimension not containing a given angle, we provide a general method to tackle similar problems.
% not containing finite point configurations corresponding to certain polynomial equations.  That is,
Given countably many multivariate polynomials $P_j$ in $nm_j$ variables with rational coefficients and maximum degree $d$, we construct a compact set $E\subset \R^n$ of Hausdorff dimension $n/d$ such that for every $j$ and for every $m_j$ distinct points
$$(x_{1,1}, \ldots, x_{1,n}), \,\ldots, \,(x_{m_j,1},\ldots, x_{m_j,n})\in E,$$
we have
$$P_j(x_{1,1},\ldots x_{1,n}, \,\ldots, \,x_{m_j,1}, \ldots, x_{m_j,n})\neq 0,$$
%which does not contain finite point configurations corresponding to zero sets of the given polynomials
(Theorem~\ref{fo}).

For example, applying this result for the polynomial 
\beqq
\sk{y-x, \,z-x} = {\sum_{i=1}^n}
(y_i-x_i)(z_i-x_i)
\eeqq
 in $3n$ variables of degree $2$, we obtain a compact set $E\subset \R^n$
of Hausdorff dimension $n/2$ which does not contain the angle $\pi/2$ (Theorem~\ref{90}). By the result quoted from \cite{hetes}, this is sharp if $n$ is even. 

Another quick application of the main result is Theorem~\ref{rac}; there is a compact set $E\subset\R^n$ of Hausdorff dimension $n/4$ which does not contain angles $\alpha$ such that $\cos^2 \alpha$ is rational. Note that these include the angles $\pi/3$ and $2\pi/3$. Though the main result can be applied for polynomials with rational coefficients only, surprisingly it can be used to exclude `irrational' angles as well: there is a compact set $E\subset\R^n$ with $\dim_H E =n/8$ such that $E$ does not contain a given angle $\alpha\in[0,\pi]$, see Theorem~\ref{nper8}.

Our main result can also be used to strengthen Falconer's construction in \cite{Falc}.
We construct a compact set $E\subset\R^n$ with $\dim_H E =n/2$ such that the distance set, $D(E)$, is Lebesgue null, moreover, it satisfies the following peculiar properties:
$E$ does not contain rational distances nor collinear points, and every distance and direction is realised at most once by $E$
(Theorem~\ref{distkollin}). Note that the distance set conjecture implies that if $\dim_H E>n/2$, then there is a distance which is realised at least twice by $E$, see Remark~\ref{neminjektiv}.

%A recent result of Erdo\v{g}an, Hart and Iosevich \cite{multi} implies that for every Borel set $A\subset\R$ with $\dim_H A \ge \frac{1}{2}+\frac{1}{2(2d-1)}$, the Lebesgue measure of the $d$-term sum $AA+\ldots + AA = \{x_1y_1+\ldots + x_d y_d \,:\, x_i, y_i\in A\}$ is positive.

Falconer \cite{FalcTriangle}, Keleti \cite{Keleti}, and Maga \cite{Maga} constructed sets of full dimension in $\R$ and in $\R^2$ which do not contain subsets similar to given three-point patterns. We show that a generalization of our main result implies the existence of such sets, see Section~\ref{fobbsection}.

Once the main theorem (Theorem~\ref{fo}) is understood, the sections of this paper can be read independently and in any order.
%The paper is organised as follows.
In Section~\ref{3} we prove the main result about excluding polynomial configurations. 
However, a necessary, technical but standard lemma is postponed until Section~\ref{8}. 
Section~\ref{4} contains the results about angles.
Section~\ref{5} contains results about collinearity and directions, and Section~\ref{6} about distance sets. 
Section~\ref{fobbsection} contains a generalization of Theorem~\ref{fo}.

In this paper, in Section~\ref{7}, we also present a result about finding many angles in sets of large dimension. We show that for every $\delta>0$ there is $c_\delta<1$ such that for every $n\ge 2$ and every Borel set $E\subset \R^n$ with $\dim_H E > c_\delta n$, $\iA(E)$ is $\delta$-dense in $[0,\pi]$ (Theorem~\ref{approxangles}). The proof combines a theorem of Mattila and a result about concentration of Lipschitz functions defined on the sphere (related to Dvoretzky's theorem). 

Finally, we pose open questions in Section~\ref{9}.

\begin{notation}
For $A\subset \R^n$, $x\in \R^n$ and $r>0$, $\dim_H A$ denotes the Hausdorff dimension of $A$, and 
%. For $x\in \R^n$ and $r>0$,
$$B(x,r)=\{y\in\R^n: |x-y|<r\}$$
and
%For $A\subset\R^n$ and $r>0$, 
$$B(A,r)=\{y\in\R^n: \text{there is }x\in A \text{ with } |x-y|<r\}.$$

The angle set of $E\subset\R^n$ is defined as
$$\iA(E)=\{\angle yxz \, : \, x,y,z \text{ are distinct points of } E\}.$$
\end{notation}

\section{Main result}\label{3}
%%%%%%%%%%%%%%%%%%%%%%%%%%%%                   %%%%%%%%%%%        %%%%%%%%%%

Let $P$ be a real-valued polynomial in $nm$ variables. Let $x_1, \ldots, x_m \in \R^n$. By writing $P(x_1, \ldots, x_m)$, we mean $P(x_{1,1}, \ldots, x_{1,n}, \ldots, x_{m,1}, \ldots, x_{m,n})$. We will denote the derivative by $P'$, which is a map $\R^{nm}\to\R^{nm}$. Partial derivative in direction $x_{i,j}$ will be denoted by $\partial_{i,j} P$.

\begin{lemma}\label{folemma}
Let $P$ be polynomial of degree $d$ in $nm$ variables with rational coefficients. Let $x_1, \ldots, x_m$ be $m$ distinct points of $\R^n$ satisfying $P(x_1, \ldots, x_m)=0$ and $P'(x_1, \ldots, x_m)\neq 0 \in \R^{nm}$. Then there exists $r>0$ such that for every sufficiently small $h>0$ there is a set $E_h\subset \R^n$ for which $B(E_h, h)=\R^n$
and 
%$B(E_h, h^d/(\log h^{-1}))$ does not contain points $y_i$ ($i=1,\ldots,m$) such that
%$P(y_1, \ldots, y_m)=0$ and $y_i\in B(x_i,r)$.
$P(y_1, \ldots, y_m)\neq 0$ whenever $y_i\in B(x_i,r)\cap B(E_h, h^d/(\log h^{-1}))$ ($i=1,\ldots,m$).
\end{lemma}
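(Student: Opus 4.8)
The plan is to exploit the hypothesis $P'(x_1,\dots,x_m)\neq 0$: some partial derivative, say $\partial_{i_0,j_0}P(x_1,\dots,x_m)\neq 0$, so after shrinking to a ball $B(x_{i_0},r)$ (and correspondingly small balls around the other $x_i$) the polynomial $P$, as a function of the single coordinate $x_{i_0,j_0}$ with all other $nm-1$ coordinates frozen in their respective balls, is strictly monotone with derivative bounded away from $0$, uniformly. Thus on each such line segment the zero set of $P$ is a single point, and $P$ has absolute value $\gtrsim \delta$ once we are at distance $\ge \delta$ (in the $x_{i_0,j_0}$ direction) from that zero. The idea is to build $E_h$ so that the $i_0$-th point $y_{i_0}$ can never land in the thin forbidden slab where $P$ is close to $0$ while simultaneously lying in $B(E_h,h^d/\log h^{-1})$.

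Concretely, I would first choose $r>0$ small enough that $\abs{\partial_{i_0,j_0}P}\ge c>0$ and that all the other partials, and $P$ itself, are bounded on $\prod_i B(x_i,2r)$; these give a two-sided Lipschitz-type estimate $\abs{P(y_1,\dots,y_m)}\ge c\,\dist_{j_0}\!\big(y_{i_0},Z\big) - C\sum_{(k,\ell)\ne(i_0,j_0)}\abs{y_{k,\ell}-z_{k,\ell}}$ comparing any point to a nearby zero $(z_1,\dots,z_m)$ of $P$, where $\dist_{j_0}$ measures displacement only in the $j_0$-coordinate. Then, for the construction of $E_h$: partition $\R^n$ into a grid of cubes of side comparable to $h$, and in the $j_0$-th coordinate direction delete a periodic family of thin slabs — of width roughly $h^d/\log h^{-1}$ times a large constant, spaced order $h$ apart — positioned so that the zero set of $P$ in the $x_{i_0,j_0}$-direction is captured; then let $E_h$ be (a discrete net in) the complement. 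Because the deleted slabs have width much larger than the thickening parameter $h^d/\log h^{-1}$ but the grid spacing is only $\sim h$, we still get $B(E_h,h)=\R^n$, while $B(E_h,h^d/\log h^{-1})$ stays out of the central part of each slab. One must be a little careful: the zero set of $P$ depends on where the \emph{other} points $y_k$ sit, so the slab to be avoided for $y_{i_0}$ moves as the other coordinates move; this is where the term $C\sum \abs{y_{k,\ell}-z_{k,\ell}}$ and the fact that all points lie in balls of radius $\le r$ (so the zero moves by at most $O(r)$, and within a single $h$-cube by at most $O(h)$) must be combined to show a single $O(h^d/\log h^{-1})$-neighbourhood removal suffices after possibly also removing such slabs around every grid hyperplane. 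I would actually make $E_h$ a union over the grid in the $j_0$ direction of the "good" sub-intervals and take a full grid in the remaining coordinates, so the only constraint is in the $x_{i_0,j_0}$ coordinate.

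With this $E_h$ in hand the verification is a direct estimate: if $y_i\in B(x_i,r)\cap B(E_h,h^d/\log h^{-1})$ for all $i$, pick the zero $(z_1,\dots,z_m)$ of $P$ obtained by moving only the $y_{i_0,j_0}$ coordinate of $(y_1,\dots,y_m)$ to the unique root on that line (which exists and lies in the relevant ball by monotonicity and the choice of $r$). By construction $y_{i_0}$ is at $j_0$-distance at least, say, $10\,h^d/\log h^{-1} + C'h^{?}$ from that root, whereas each $y_{k,\ell}$ is within $h^d/\log h^{-1}$ of $E_h$; feeding this into the Lipschitz bound makes the first (positive) term dominate, so $\abs{P(y_1,\dots,y_m)}\ge c\cdot 9\,h^d/\log h^{-1} - (\text{small}) >0$, which is the claim.

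The main obstacle, and the place that needs the most care, is getting the \emph{dependence on the other coordinates} right: the forbidden region for $y_{i_0}$ is not a fixed slab but depends on $y_1,\dots,y_{i_0-1},y_{i_0+1},\dots,y_m$, and within the scale $h$ of the grid these other points may vary. One has to arrange the removed slabs to be wide enough (a constant multiple of $h^d/\log h^{-1}$, not just exactly that) so that the \emph{whole} moving family of zeros, as the other points range over a single $h$-cube, stays inside the removed region — equivalently, one removes a fixed slab whenever the root, for \emph{some} admissible choice of the other points, falls there; since the root moves Lipschitz-ly with the other points and those points range over a set of diameter $O(h)$, the union of possible roots over one $h$-cube has $j_0$-width $O(h)$, so one actually removes slabs of width $O(h)$ (not $O(h^d/\log h^{-1})$) at a spacing $O(h)$ — and it is here, keeping the removed fraction bounded below $1$ so that $B(E_h,h)=\R^n$ still holds, that the precise constants matter. (In the actual proof this bookkeeping is presumably what gets pushed into the "technical but standard lemma" of Section~\ref{8}.)
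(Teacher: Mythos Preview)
Your proposal correctly isolates a coordinate direction in which $P$ is uniformly monotone near $(x_1,\dots,x_m)$, but the construction of $E_h$ has a genuine gap, and the gap is exactly where the hypothesis ``rational coefficients'' is needed---your argument never uses it.

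The difficulty you flag in your last paragraph is fatal to the approach as stated. The set $E_h\subset\R^n$ is a single set; each $y_k$ must lie in $B(x_k,r)\cap B(E_h,h^d/\log h^{-1})$. Near the points $x_k$ with $k\neq i_0$ you are effectively allowing $y_k$ to range over an $h$-net in all of $B(x_k,r)$. As those $m-1$ points vary over their nets, the root of $P$ in the $y_{i_0,j_0}$-direction sweeps out an interval of length comparable to $r$, not $h$. Your observation that ``the union of possible roots over one $h$-cube has $j_0$-width $O(h)$'' is true for a single $h$-cube in the other variables, but there are on the order of $h^{-n(m-1)}$ such cubes in $\prod_{k\ne i_0}B(x_k,r)$, and their root-intervals together cover essentially the whole $j_0$-range in $B(x_{i_0},r)$. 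So no periodic family of removed slabs of width $O(h)$ can capture all of them while leaving an $h$-net behind. (Also: the lemma in Section~\ref{8} is purely about the Hausdorff dimension of the eventual intersection $\bigcap \overline{B(E_{h_i},\cdot)}$; it plays no role in building $E_h$.)

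The paper's proof resolves this arithmetically rather than geometrically. Clear denominators so $P$ has integer coefficients, set $L=N^{-1}\Z^n$, and take $A_i=B(x_i,r)\cap L$ for $i\ge 2$. The crucial point is that for $y_i\in L$ one has $P(y_1,\dots,y_m)\in N^{-d}\Z$, because $P$ has integer coefficients and degree $d$. Now shift only $A_1$ by the vector $u=(1/(2N^d a),0,\dots,0)$, where $a=\partial_{1,1}P(x_1,\dots,x_m)$. A first-order Taylor estimate shows that $P(y_1+u,y_2,\dots,y_m)$ lies in $N^{-d}\Z+\tfrac{1}{2N^d}$ up to an error at most $\tfrac{1}{5N^d}$, hence is at distance at least $\tfrac{1}{4N^d}$ from $0$, \emph{simultaneously for every lattice configuration}. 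This persists under a $cN^{-d}$-thickening, and taking $N\sim \sqrt{n}/h$ converts $cN^{-d}$ into $h^d/\log h^{-1}$. The single shift replaces your unmanageable family of moving slabs.
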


It is worth to think about $E_h$ as a locally finite set in which the distance of ``neighbouring'' points is approximately $h$.

\begin{proof}
We may suppose without loss of generality that $P$ has integer coefficients only, and that $\partial_{1,1} P (x_1, \ldots, x_m) = a \neq 0$. Let $r>0$ be so small that
\beq\label{amekkora}
|\partial_{1,1} P (y_1, \ldots, y_m) -a|\le a/3
\eeq
whenever $y_i\in B(x_i,r)$, and that $r\le \min_{i\neq j} |x_i-x_j|/4$.

Let $N$ be a sufficiently large positive integer. Let $L=N^{-1}\Z^d$.
Let $$A_i=B(x_i, r) \cap L \quad (i=2,\ldots, m).$$
Let $u=(1/(2N^d a), 0, \ldots) \in R^{n}$.
Let
$$A_1=B(x_1, r) \cap L + u.$$
Suppose that $y_i\in B(x_i,r)\cap L$ ($i=1,\ldots, m$). Then $P(y_1,\ldots y_m)$ is of the form $j/N^d$ for some integer $j$ with $|j|\le C N^d$, where $C$ is independent of $N$, and only depends on $P$ and the points $x_i$.

Let $y'_1=y_1+u \in A_1$. Then
$$\abs{P(y'_1, y_2, \ldots, y_m)-P(y_1, \ldots, y_m) - |u| \partial_{1,1}P(y_1, \ldots, y_m)} \le C'|u|^2,$$
where $C'$ comes from the second derivative of $P$, independent of the choice of $y_i$. 
Using \eqref{amekkora}, 
$$\abs{P(y'_1, y_2, \ldots, y_m) - \frac{j}{N^d}- \frac{1}{2N^d}} \le  \frac{1}{6N^d} + C'\frac{1}{4N^{2d} a^2},$$
end if we choose $N$ large enough,
\beq\label{racs1}
\abs{P(y'_1, y_2, \ldots, y_m) - \frac{j}{N^d} - \frac{1}{2N^d}} \le  \frac{1}{5N^d}.
\eeq

Suppose now that $z_i\in B(A_i, c N^{-d})$ ($i=1,\ldots, m$) for some small constant $c>0$. By \eqref{racs1}, there is an integer $j$ for which
$$%\label{racs2}
\abs{P(z_1, \ldots, z_m) - \frac{j}{N^d} - \frac{1}{2N^d}} \le  \frac{1}{5N^d} + C''cN^{-d},
$$
where $C''$ only depends on $P$ and the points $x_i$. Choosing $c=\frac{1}{20C''}$, we have
$$
\abs{P(z_1, \ldots, z_m) - \frac{j}{N^d} - \frac{1}{2N^d}} \le  \frac{1}{4N^d},
$$
that is, 
\beq\label{racs3}
\abs{N^d P(z_1, \ldots, z_m) - j - \frac{1}{2}} \le  \frac{1}{4}
\eeq
for some integer $j$. Therefore the distance of $N^d P(z_1, \ldots, z_m)$ to the nearest integer is at least $1/4$, and $P(z_1, \ldots, z_m)\neq 0$.

Let $h>0$ be sufficiently small, and choose the smallest possible $N$ so that $h \ge \sqrt{n} N^{-1}$. Let 
$$E_h= \bigcup_{i=1}^m A_i \cup \left(\R^n \setminus \bigcup B(x_i,r)\right).$$
Then $\R^n = B(E_h, \sqrt{n}N^{-1}) = B(E_h, h)$. If $h$ is sufficiently small, then
$$h^d/(\log h^{-1})\le cN^{-d}= N^{-d}/(20C''),$$
and \eqref{racs3} implies that $P(z_1, \ldots, z_m)\neq 0$ whenever
\beqq
z_i \in B(x_i,r/2) \cap B(E_h, h^d/(\log h^{-1})) \quad (i=1,\ldots, m). \qedhere
\eeqq
\end{proof}

\begin{prop}\label{foprop}
Let $n\ge 1$. Let $J$ be a countable set. For each $j\in J$, let $m_j$ be a positive integer, and let $P_j:\R^{nm_j}\to\R$ be a polynomial in $nm_j$ variables with rational coefficients. Assume that $d$ is the maximum degree of the polynomials $P_j$ ($j\in J$).
Then there exists a compact set $E\subset \R^n$ of Hausdorff dimension $n/d$ such that for every $j\in J$, $E$ does not contain $m_j$ distinct points $x_1, \ldots, x_{m_j}$ satisfying $P_j(x_1, \ldots, x_{m_j})=0$ and $P_j'(x_1, \ldots, x_{m_j})\neq 0$.
%Suppose that for every $j\in J$, whenever $P_j(x_1, \ldots, x_{m_j})=0$ for $m_j$ distinct points $x_i$ in $\R^n$, we have $P_j'(x_1, \ldots, x_{m_j})\neq 0$.
\end{prop}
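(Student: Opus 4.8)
The plan is to build $E$ as a decreasing intersection of compact sets $E = \bigcap_k F_k$, where each $F_k$ is a finite union of small closed cubes, and to use Lemma~\ref{folemma} as the engine that removes, at each stage, all the ``dangerous'' configurations coming from one polynomial at one scale. Enumerate the countable family of constraints as a sequence indexed by $k\in\N$, where each index records a polynomial $P_j$ together with a choice of tuple of distinct base points $x_1,\dots,x_{m_j}$ (from a countable dense set of such tuples) on which $P_j$ vanishes with nonzero derivative; by a standard diagonal/bookkeeping argument we arrange that every relevant configuration is treated infinitely often. At step $k$ we have $F_{k-1}$ built from cubes of side roughly $h_{k-1}$; we apply Lemma~\ref{folemma} with the current $P_j$ and base points, at scale $h=h_k$ chosen very small compared to $h_{k-1}$, to obtain the set $E_{h_k}$ with $B(E_{h_k},h_k)=\R^n$, and we replace each cube of $F_{k-1}$ by the sub-cubes of side about $h_k^d/\log h_k^{-1}$ that meet $E_{h_k}$ (intersected with $B(x_i,r)$ where the lemma applies, and keeping everything outside $\bigcup B(x_i,r)$ untouched). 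The conclusion of the lemma then guarantees that no tuple of points of $F_k$ lying near the relevant base points satisfies $P_j=0$; since every configuration is revisited infinitely often at finer and finer scales, the limit set $E$ contains no genuine solution tuple with nonvanishing derivative.

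The dimension computation is the second main ingredient. Going from $F_{k-1}$ (cubes of side $\delta_{k-1}$) to $F_k$ (cubes of side $\delta_k \approx h_k^d/\log h_k^{-1}$, with $h_k\gg \delta_k$ so that $h_k$ is genuinely a coarser ``spacing''), the number of surviving sub-cubes inside each old cube is comparable to $(\delta_{k-1}/h_k)^n$, because the retained sub-cubes are those meeting the $h_k$-net $E_{h_k}$, which has spacing $\approx h_k$. Thus the ``local dimension'' produced at stage $k$ is about
\[
\frac{\log (\delta_{k-1}/h_k)^n}{\log(1/\delta_k)} = \frac{n\log(\delta_{k-1}/h_k)}{\log(\log h_k^{-1}/h_k^d)}\longrightarrow \frac{n\cdot(-\log h_k)}{d\cdot(-\log h_k)} = \frac{n}{d}
\]
provided the scales decay fast enough that $\log(1/\delta_{k-1})$ is negligible compared to $\log(1/\delta_k)$ and the $\log\log$ corrections wash out. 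This gives the mass-distribution/Frostman bound $\dim_H E \ge n/d$; the upper bound $\dim_H E\le n/d$ is immediate from the covering by the $F_k$. Making this rigorous is exactly the ``technical but standard'' lemma the author defers to Section~\ref{8}: one constructs a natural measure on $E$ (uniform on the cubes at each level) and checks a Frostman-type estimate $\mu(B(x,\rho))\lesssim \rho^{n/d-\eps}$ for all $\eps>0$.

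The main obstacle, and the place that needs care, is the interaction between the \emph{local} nature of Lemma~\ref{folemma} and the need to exclude \emph{all} solution tuples, including those whose points are far apart or close to the boundary of the balls $B(x_i,r)$. The lemma only kills configurations whose $i$-th point lies in a fixed small ball $B(x_i,r/2)$ around a prescribed base point. To cover an arbitrary potential solution tuple $(y_1,\dots,y_{m_j})\in E^{m_j}$ with $P_j=0$, $P_j'\neq 0$, one uses compactness: the set of such tuples (with points at pairwise distance bounded below, say, on a compact region) is covered by finitely many products of small balls, each centered at a rational tuple where $P_j$ still vanishes with nonzero derivative — here one needs that solutions with nonvanishing gradient can be approximated by \emph{rational} solutions with nonvanishing gradient, which follows from the implicit function theorem applied to one of the nonzero partial derivatives. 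Degenerate cases — two of the $y_i$ coinciding, or $P_j'(y_1,\dots,y_{m_j})=0$ — are exactly the ones excluded from the statement, so they cause no trouble. Organizing the bookkeeping so that every (polynomial, rational-base-point-tuple, scale) triple is handled while keeping the scale sequence decaying fast enough for the dimension bound is the delicate combinatorial part, but it is routine once the two ingredients above are in place; hence Proposition~\ref{foprop} follows, and Theorem~\ref{fo} is its restatement.
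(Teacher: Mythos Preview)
Your overall strategy matches the paper's: enumerate a countable family of base tuples on the zero sets of the $P_j$, apply Lemma~\ref{folemma} at each to produce sets $E_{h_k}$, intersect their thin neighbourhoods, and invoke a Frostman/mass-distribution argument (the deferred Lemma~\ref{dimlemma}) for the dimension lower bound. Two points need correction.

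The first is a genuine gap. You assert that ``solutions with nonvanishing gradient can be approximated by \emph{rational} solutions with nonvanishing gradient'' and cite the implicit function theorem. This is false: for instance $P(x)=x^2-2$ has no rational zeros, and the circle $x^2+y^2=3$ has no rational points whatsoever. The implicit function theorem gives a local smooth parametrisation of the solution hypersurface, not rational points on it. Fortunately Lemma~\ref{folemma} does not require the base tuple $(x_1,\dots,x_m)$ to be rational; only the coefficients of $P$ are. What you actually need is merely a \emph{countable} cover of
\[
G_j=\{(x_1,\dots,x_{m_j}):P_j=0,\ P_j'\neq 0,\ x_i\text{ distinct}\}
\]
by the balls $B(x,r_j(x))$ coming from the lemma, and the paper obtains this directly from the Lindel\"of property of $\R^{nm_j}$, with no reference to rationality of the centres.

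Two smaller remarks. The ``treated infinitely often'' bookkeeping is unnecessary: once a base ball is handled at stage $k$, the conclusion of Lemma~\ref{folemma} holds for all tuples from $F_k$, hence for all tuples from $E\subset F_k$, permanently; each (polynomial, base ball) pair need appear only once in your enumeration $\phi$. Finally, your last sentence is wrong on two counts: the upper bound $\dim_H E\le n/d$ is \emph{not} immediate from the covering (outside the balls $B(x_i,r)$ nothing is removed at stage $k$, and indeed the paper only proves $\dim_H E\ge n/d$); and Theorem~\ref{fo} is not a restatement of Proposition~\ref{foprop} but a genuine strengthening, removing the hypothesis $P_j'\neq 0$ by an additional argument with iterated partial derivatives.
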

\begin{proof}
For $j\in J$ let
\begin{align*}
G_j=\{(x_1, \ldots, x_{m_j}) \in \R^{nm_j} : \ & P_j(x_1, \ldots, x_{m_j})=0, \,P'_j(x_1,\ldots x_{m_j})\neq 0, \\
& x_i\neq x_j \text{ if } i\neq j\}.
\end{align*}
For every point $x\in G_j$ we may apply Lemma~\ref{folemma}. We obtain $0<r=r_j(x)$ such that for every sufficiently small $h>0$ there is a set $E_h\subset \R^n$ for which
$B(E_h,h)=\R^n$ and $B(E_h, h^d/(\log h^{-1}))$ does not contain distinct points $y_1, \ldots, y_{m_j}$ satisfying $P(y_1, \ldots, y_{m_j})=0$ and $(y_1, \ldots, y_{m_j})\in B(x, r_j(x))$.

The sets $B(x,r_j(x))$ ($x\in G_j$) form an open cover of $G_j\subset \R^{n m_j}$.
The space $\R^{n m_j}$ is second countable, so every subset has the Lindel\"of property.
Therefore we can choose a countable open subcover for $G_j$. If we do this for all $j\in J$, we still have altogether countably many open balls (in various dimensions). Therefore we may fix a function
$$\phi : \{1,2,\ldots\}\to \bigcup_{j\in J} \{j\} \times G_j$$
such that for every $j\in J$,
\beq\label{mindentfed}
\bigcup \big\{ B\big(\phi_2(i), r_j(\phi_2(i))\big) : \phi_1(i)=j, \ i\in \{1,2,\ldots\}\big\} \supset G_j
\eeq
where $\phi(i)=(\phi_1(i), \phi_2(i))$. 

Choose $h_1$ small enough so that it satisfies the assumption \eqref{tul1} of Lemma~\ref{dimlemma}, and also small enough that the statement of Lemma~\ref{folemma} holds for the polynomial $P_{\phi_1(1)}$, point $(x_1,\ldots, x_m)=x=\phi_2(1)$ and $r=r_{\phi_1(1)}(\phi_2(1))$. We obtain a set $E_{h_1}\subset \R^n$. For $j\ge 2$, if $h_i$ ($i< j$) is already chosen, choose $h_j$ 
small enough to satisfy \eqref{tul1} and \eqref{tul2}, and that Lemma~\ref{folemma} holds for the polynomial $P_{\phi_1(j)}$, point $(x_1,\ldots, x_m)=x=\phi_2(j)$ and $r=r_{\phi_1(j)}(\phi_2(j))$. We obtain sets $E_{h_i}\subset \R^n$ ($i=1,2,\ldots$).

Let $$E=B(0,1)\cap \bigcap_{i=1}^\infty \overline{B(E_{h_i}, h_i^d/(2\log h_i^{-1}))}.$$
Then $E\subset \bigcap B(E_{h_i}, h_i^d/(\log h_i^{-1}))$. Thus \eqref{mindentfed} and Lemma~\ref{folemma} imply that for every $j\in J$, $E$ does not contain $m_j$ points $x_1, \ldots, x_{m_j}$ such that $(x_1,\ldots,x_{m_j})\in G_j$. In other words, $E$ does not contain $m_j$ distinct points $x_1, \ldots, x_{m_j}$ satisfying $P_j(x_1, \ldots, x_{m_j})=0$ and $P'_j(x_1,\ldots, x_{m_j})\neq 0$.

Lemma~\ref{dimlemma} implies that the Hausdorff dimension of $E$ is at lest $n/d$.
\end{proof}

%The following theorem asserts that there exists a large compact set in $\R^n$ which does not contain configurations corresponding to countably many given polynomials.

We can get rid of the assumption on the derivatives $P'_j$ in Proposition~\ref{foprop} by considering a larger family of polynomials.

\begin{theorem}\label{fo}
Let $n\ge 1$. Let $J$ be a countable set.
For each $j\in J$, let $m_j$ be a positive integer, and let
$P_j:\R^{nm_j}\to\R$ be a (non identically zero) polynomial in $nm_j$ variables with rational coefficients. Assume that $d$ is the maximum degree of the polynomials $P_j$ ($j\in J$).
Then there exists a compact set $E\subset \R^n$ of Hausdorff dimension $n/d$ such that for every $j\in J$, $E$ does not contain $m_j$ distinct points $x_1,\ldots, x_{m_j}$ satisfying $P_j(x_1, \ldots, x_{m_j})=0$.
\end{theorem}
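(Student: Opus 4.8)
The plan is to deduce Theorem~\ref{fo} from Proposition~\ref{foprop} by enlarging the given countable family of polynomials. The role of the derivative hypothesis $P_j'\neq 0$ in Proposition~\ref{foprop} is that it excludes exactly the configurations at which $P_j$ vanishes \emph{to first order}; the configurations at which $P_j$ vanishes to higher order are precisely the ones at which some lower order partial derivative of $P_j$ vanishes to first order. Since a partial derivative of a polynomial of degree $d$ has degree at most $d-1$, throwing all partial derivatives of the $P_j$ into the family does not raise the maximal degree above $d$ (one of the original $P_j$ already has degree exactly $d$), so Proposition~\ref{foprop} still yields a set of dimension $n/d$.

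Concretely, I would first form, for each $j\in J$, the finite collection $\mathcal F_j$ of all partial derivatives $\partial^\alpha P_j$ (over all multi-indices $\alpha$ in the $nm_j$ variables) which are not identically zero, and set $\mathcal F=\bigcup_{j\in J}\mathcal F_j$. This is a countable family of non-identically-zero polynomials with rational coefficients, each member $\partial^\alpha P_j$ being regarded as a polynomial in the same $nm_j$ variables as $P_j$, hence with the same number of points $m_j$ attached to it. As the $P_j$ themselves lie in $\mathcal F$ and every proper derivative has degree $\le d-1$, the maximum degree occurring in $\mathcal F$ is exactly $d$. Applying Proposition~\ref{foprop} to $\mathcal F$ produces a compact set $E\subset\R^n$ with $\dim_H E=n/d$ such that $E$ contains no tuple of distinct points at which a member $Q$ of $\mathcal F$ satisfies $Q=0$ and $Q'\neq 0$.

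To check that this $E$ has the property required by Theorem~\ref{fo}, I would argue by contradiction: suppose $x_1,\dots,x_{m_j}\in E$ are distinct with $P_j(x_1,\dots,x_{m_j})=0$, and write $\bar x=(x_1,\dots,x_{m_j})$. Let $k$ be the order of vanishing of $P_j$ at $\bar x$, that is, the least $|\beta|$ with $\partial^\beta P_j(\bar x)\neq 0$. Since $P_j$ has degree $d$, some derivative of order $d$ is a nonzero constant, so $k$ is well defined, and $1\le k\le d$ (with $k\ge 1$ because $P_j(\bar x)=0$). By minimality, every derivative of order $k-1$ vanishes at $\bar x$ while some derivative $\partial^{\beta_0}P_j$ of order $k$ does not; writing $\partial^{\beta_0}=\partial_{i,l}\partial^{\alpha}$ with $|\alpha|=k-1$, the polynomial $Q:=\partial^{\alpha}P_j$ satisfies $Q(\bar x)=0$ and $\partial_{i,l}Q(\bar x)\neq 0$, so $Q'(\bar x)\neq 0$ and in particular $Q\not\equiv 0$, i.e.\ $Q\in\mathcal F$. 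Then $\bar x$ is a tuple of distinct points of $E$ with $Q(\bar x)=0$ and $Q'(\bar x)\neq 0$, contradicting the defining property of $E$. Hence no such configuration exists for any $j$.

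I do not expect a genuine obstacle here: all of the analytic and measure-theoretic content sits in Lemma~\ref{folemma}, Lemma~\ref{dimlemma} and Proposition~\ref{foprop}. The only place calling for a little care is the bookkeeping with the order of vanishing — in particular checking that $k\le d$, so that the polynomial $Q$ extracted is an honest partial derivative of $P_j$ with rational coefficients, of degree $\le d$, and not identically zero.
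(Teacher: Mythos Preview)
Your argument is correct and essentially the same as the paper's: both reduce to Proposition~\ref{foprop} by adjoining partial derivatives of the $P_j$ and then locating, at any putative zero configuration, a derivative that vanishes there to first order. The only cosmetic difference is that the paper, for each $P_j$, adjoins only a single chain $\partial_{i_1}P_j,\ \partial_{i_2}\partial_{i_1}P_j,\ \ldots$ along one monomial of maximal degree (finding the first place along that chain where the value becomes nonzero), whereas you adjoin all nonzero partial derivatives and use the order of vanishing; either way the enlarged family is countable with maximal degree $d$, so Proposition~\ref{foprop} applies identically.
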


\begin{proof}
Consider a polynomial $P=P_j$ ($j\in J$). Let $y_1, \ldots, y_{nm}$ denote the variables of $P$. Choose a monomial of $P_j$ which has the same degree as $P$ itself. Let this be $y_{i_1} y_{i_2} \cdots y_{i_r}$, where $i_k\in \{1,\ldots, nm\}$ for $k=1,\ldots, r$.
Consider the partial derivatives
$$\partial_{i_1} P, \ \partial_{i_2} \partial_{i_1} P, \ \ldots, \ \partial_{i_r}\cdots\partial_{i_1}P.$$
The last polynomial in this list is constant and non-zero.
Let $x_1, \ldots, x_m\in \R^n$ be $m$ distinct points. Notice that if $P(x_1,\ldots, x_m)=0$, then there exists an integer $1\le k\le r$ such that
\beq\label{obs}
\partial_{i_{k-1}} \cdots \partial_{i_1} P(x_1, \ldots, x_m)=0, \quad
\partial_{i_{k}} \cdots \partial_{i_1} P(x_1, \ldots, x_m)\neq 0.
\eeq
We apply Proposition~\ref{foprop} for these polynomials $P, \ \partial_{i_1}P, \ \ldots, \ \partial_{i_{r-1}}\cdots\partial_{i_1}P$, for every $P=P_j$ ($j\in J$). By the observation \eqref{obs}, the obtained set $E$ has the desired properties.
\end{proof}

\begin{remark}\label{fosharp}
%Theorem~\ref{fo} is not sharp for every family of polynomials. However,
For every positive integer $d$, there is a polynomial of degree $d$ for which Theorem~\ref{fo} is sharp. By a theorem of Mattila (see \cite{Mattila}), every analytic set $E\subset \R^d$ with $\dim_H E>1$ contains at least $d+1$ points in a hyperplane (in fact, there is a hyperplane which intersects $E$ is a set of positive dimension).
On the other hand, $x_0, \ldots, x_{d}\in\R^d$ lie in a hyperplane if and only if the determinant of the $d\times d$ matrix formed by the vectors $x_i-x_0$ ($i=1,\ldots, d$) is zero. This determinant is a polynomial in $d(d+1)$ variables of degree $d$. Therefore Theorem~\ref{fo} applied to this polynomial with $n=d$ gives a sharp result.
\end{remark}

\begin{remark}\label{fonemsharp}
Theorem~\ref{fo} is obviously not sharp for every family of polynomials. For example, let $P(x,y,z,u,v,w)=xyz-uvw$. Then the theorem yields a compact set $E\subset\R$ of dimension $1/3$ such that $P(x,y,z,u,v,w)\neq 0$ whenever $x,y,z,u,v,w\in E$ are distinct. However, there is such set $E$ of dimension $1$ as well. To see this, apply the theorem for $Q(x,y,z,u,v,w)=x+y+z-u-v-w$, the compact set $F\subset \R$ obtained has dimension $1$, and put $E=\{e^x: x\in F\}$.
\end{remark}

Remark~\ref{fonemsharp} indicates that considering a diffeomorphism (between subsets) of $\R^n$ with Theorem~\ref{fo} can produce stronger results. In Section~\ref{fobbsection} we sketch a generalization of Theorem~\ref{fo} which allows the use of multiple diffeomorphisms, and we give an application.

\section{Angle sets}\label{4}

\begin{theorem}\label{90}
Let $n\ge 2$. There exists a compact set $E\subset \R^n$ of Hausdorff dimension $n/2$ such that $E$ does not contain three points forming the angle $\pi/2$. Moreover, $E$ does not contain four points $x,y,z,v$ such that $x-y$ and $z-v$ are orthogonal.
\end{theorem}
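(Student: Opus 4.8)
The plan is to derive Theorem~\ref{90} directly from the main theorem, Theorem~\ref{fo}, by feeding it the two degree-$2$ polynomials that encode the two orthogonality conditions. For three points, $\angle yxz=\pi/2$ is equivalent to $\langle y-x,\,z-x\rangle=0$, and for four points (with $x\neq y$ and $z\neq v$, so that the two difference vectors are nonzero) $x-y\perp z-v$ is equivalent to $\langle x-y,\,z-v\rangle=0$. Accordingly I would put
\[
P_1(x_1,x_2,x_3)=\sum_{i=1}^n (x_{2,i}-x_{1,i})(x_{3,i}-x_{1,i}),
\qquad
P_2(x_1,x_2,x_3,x_4)=\sum_{i=1}^n (x_{1,i}-x_{2,i})(x_{3,i}-x_{4,i}),
\]
in $3n$ and $4n$ variables respectively. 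Both have integer coefficients, both have degree exactly $2$, and neither is identically zero. Applying Theorem~\ref{fo} to the finite (hence countable) family $\{P_1,P_2\}$, whose maximum degree is $d=2$, produces a compact set $E\subset\R^n$ of Hausdorff dimension $n/2$ containing no three distinct points $x_1,x_2,x_3$ with $P_1(x_1,x_2,x_3)=0$ and no four distinct points $x_1,x_2,x_3,x_4$ with $P_2(x_1,x_2,x_3,x_4)=0$.

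It then remains to check that this $E$ works. If $x,y,z\in E$ were distinct with a right angle at $x$, then $P_1(x,y,z)=\langle y-x,z-x\rangle=0$, contradicting the choice of $E$; since any of the three vertices can be placed in the first slot of $P_1$ after relabelling, this gives $\pi/2\notin\iA(E)$. For the ``moreover'' clause, suppose $x,y,z,v\in E$ with $x\neq y$, $z\neq v$ and $\langle x-y,z-v\rangle=0$. If $x,y,z,v$ are all distinct this is $P_2(x,y,z,v)=0$, again impossible. If they are not all distinct, then $\{x,y\}$ and $\{z,v\}$ must share exactly one point (they cannot be equal as pairs, since that would force $|x-y|^2=0$), so after relabelling via $\langle x-y,z-v\rangle=\pm\langle x-y,v-z\rangle$ we may assume the common point is a shared endpoint; the three points involved are then distinct and form a right angle there, which is excluded by $P_1$ exactly as above. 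Hence no such quadruple exists.

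This argument carries essentially no difficulty once Theorem~\ref{fo} is in hand; the only spot that wants a careful word is the degenerate bookkeeping in the last step, where one must confirm that each way two of the four points can coincide really does collapse the quadruple to three \emph{distinct} points forming a genuine right angle (so that $P_1$ applies), rather than producing a zero-length segment. Finally, I would recall from the introduction that $n/2$ is the best possible dimension here when $n$ is even, by the quoted theorem of Mattila giving $\pi/2\in\iA(E)$ once $\dim_H E>\lceil n/2\rceil$.
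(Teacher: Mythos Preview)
Your proof is correct and follows essentially the same approach as the paper: apply Theorem~\ref{fo} to the two degree-$2$ polynomials $\langle y-x,z-x\rangle$ and $\langle x-y,z-v\rangle$ (the paper labels them in the opposite order), obtaining a compact $E\subset\R^n$ of dimension $n/2$. The paper's proof is a terse two lines, while you have spelled out the degenerate-case bookkeeping for the four-point clause; this is a welcome elaboration, not a different method.
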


\begin{proof}
It is enough to exclude solutions of $\sk{x-y, z-v}=0$ when $x, y, z$ are three distinct points in $\R^n$ and $z\neq v$.
Applying Theorem~\ref{fo} for the polynomials $P_1(x,y,z,v)=\sum_{i=1}^n (x_i-y_i)(z_i-v_i)$ and $P_2(x,y,z)=\sum_{i=1}^n (x_i-y_i)(z_i-x_i)$ yields the result.
\end{proof}

\begin{theorem}\label{rac}
Let $n\ge 2$. There exists a compact set $E\subset \R^n$ of Hausdorff dimension $n/4$ such that $E$ does not contain three points forming an angle $\alpha$ for which $\cos^2 \alpha$ is rational. (Moreover, $E$ does not contain four distinct points $x,y,z,v$ such that the directions $x-y$ and $z-v$ form an angle $\alpha$ for which $\cos^2 \alpha$ is rational.)
\end{theorem}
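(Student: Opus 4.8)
The plan is to reduce the statement to a single application of Theorem~\ref{fo} to a countable family of degree~$4$ polynomials with integer coefficients. First I would record the elementary identity that, for distinct $x,y,z\in\R^n$, the angle $\alpha=\angle yxz$ satisfies
$$\cos^2\alpha=\frac{\sk{y-x,\,z-x}^2}{\abs{y-x}^2\abs{z-x}^2}\in[0,1],$$
so that $\cos^2\alpha$ is rational if and only if there are integers $p,s$ with $s\ge 1$, $0\le p\le s$ and
$$s\,\sk{y-x,\,z-x}^2-p\,\abs{y-x}^2\abs{z-x}^2=0;$$
likewise, for distinct $x,y,z,v\in\R^n$, the angle enclosed by the directions $x-y$ and $z-v$ has rational squared cosine iff $s\,\sk{x-y,\,z-v}^2-p\,\abs{x-y}^2\abs{z-v}^2=0$ for some such pair $(p,s)$.

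Next I would introduce, for every admissible pair $(p,s)$, the polynomial
$$P_{p,s}(x,y,z)=s\Bigl(\sum_{i=1}^n(y_i-x_i)(z_i-x_i)\Bigr)^2-p\Bigl(\sum_{i=1}^n(y_i-x_i)^2\Bigr)\Bigl(\sum_{i=1}^n(z_i-x_i)^2\Bigr)$$
in the $3n$ variables $x,y,z$, together with the analogous polynomial $Q_{p,s}(x,y,z,v)$ obtained by replacing $(y-x,\,z-x)$ with $(x-y,\,z-v)$, which lives in $4n$ variables. Each of these is a polynomial of degree~$4$ with integer coefficients, and each is not identically zero: the two quartic forms $\sk{u,w}^2$ and $\abs{u}^2\abs{w}^2$ (in $u,w\in\R^n$) are not scalar multiples of one another when $n\ge 2$ (compare their values at $u=w=e_1$ and at $u=e_1$, $w=e_2$), hence no combination $sA^2-pB$ with $s\ge 1$ vanishes identically, and inside $P_{p,s}$ (resp.\ $Q_{p,s}$) the vectors $y-x,z-x$ (resp.\ $x-y,z-v$) range independently over all of $\R^n$. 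This is the only place where a small computation is needed; it is precisely what makes Theorem~\ref{fo} applicable to these polynomials.

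Finally I would apply Theorem~\ref{fo} to the countable family $\{P_{p,s}\}\cup\{Q_{p,s}\}$ indexed by $\{(p,s)\in\Z^2:s\ge 1,\ 0\le p\le s\}$, whose maximum degree is $4$; here $m_j=3$ for the polynomials $P_{p,s}$ and $m_j=4$ for the polynomials $Q_{p,s}$. This produces a compact set $E\subset\R^n$ with $\dim_H E=n/4$ that contains no three distinct points annihilating any $P_{p,s}$ and no four distinct points annihilating any $Q_{p,s}$; by the first paragraph this says precisely that $E$ contains no three points forming an angle $\alpha$ with $\cos^2\alpha\in\Q$, and no four distinct points $x,y,z,v$ whose directions $x-y$ and $z-v$ form such an angle. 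I do not expect a genuine obstacle: the substance of the proof is the observation that ``$\cos^2\alpha\in\Q$'' is cut out by countably many integer polynomials of degree~$4$, after which Theorem~\ref{fo} does all the work.
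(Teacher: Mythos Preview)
Your proposal is correct and follows essentially the same route as the paper: write the condition $\cos^2\alpha\in\Q$ as a countable family of degree-$4$ rational-coefficient polynomial equations in the points, and apply Theorem~\ref{fo}. The only difference is cosmetic---you index by pairs $(p,s)$ after clearing denominators and explicitly verify non-triviality of the polynomials, whereas the paper indexes directly by $\alpha$ with $\cos^2\alpha\in\Q$ and leaves that check implicit.
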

\begin{proof}
There are countably many $\alpha$ for which $\cos^2 \alpha$ is rational. For such $\alpha$,
consider the polynomials $P_\alpha:\R^{3n}\to \R$ and $Q_\alpha:\R^{4n}\to\R$,
$$P_\alpha(x,y,z)=\sk{y-x, z-x}^2-\cos^2\alpha |y-x|^2 |z-x|^2,$$
$$Q_\alpha(x,y,z,v)=\sk{y-x, z-v}^2-\cos^2\alpha |y-x|^2 |z-v|^2.$$
Applying Theorem~\ref{fo} for these polynomials yields the statement.
\end{proof}

\begin{remark}
In the proof of Theorem~\ref{rac}, when $\alpha$ is not $0$, $\pi/2$ or $\pi$,  we could just refer to Proposition~\ref{foprop}. This is because the derivatives of $P_\alpha$, $Q_\alpha$ do not vanish where the polynomials vanish. However, for the angles $0$, $\pi/2$ or $\pi$ referring to Theorem~\ref{fo} is necessary: this corresponds to the fact that for $\pi/2$ and for $0, \pi$ we have better results than dimension $n/4$ (Theorem~\ref{90} and Theorem~\ref{kollin}).
\end{remark}

Though we allow polynomials with rational coefficients only, Theorem~\ref{fo} can be used to exclude irrational angles as well.

\begin{theorem}\label{nper8}
Let $S\subset [0,\pi]$ be a given countable family of angles. There exists a compact set $E\subset \R^n$ of Hausdorff dimension $n/8$ such that $E$ does not contain three points forming an angle $\alpha\in S$. (Moreover, $E$ does not contain four distinct points $x,y,z,v$ such that the directions $x-y$ and $z-v$ form an angle $\alpha\in S$.)
\end{theorem}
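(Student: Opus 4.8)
The plan is to reduce the exclusion of a single irrational angle $\alpha$ to the exclusion of finitely many rational polynomial configurations, to which Theorem~\ref{fo} applies. The obstacle is that $\cos^2\alpha$ need not be rational, so the polynomials $P_\alpha$, $Q_\alpha$ from the proof of Theorem~\ref{rac} do not have rational coefficients. The key observation is that we are free to choose \emph{any} rational number $q$ and exclude the configuration $\langle y-x,z-x\rangle^2 = q\,|y-x|^2|z-x|^2$; a single such polynomial has degree $4$ and gives a set of dimension $n/4$. The idea is to use \emph{two} such constraints, $q_1 < \cos^2\alpha < q_2$, chosen so that the open interval $(q_1,q_2)$ contains $\cos^2\alpha$; then excluding both configurations excludes every angle $\beta$ with $\cos^2\beta \in \{q_1,q_2\}$, but more importantly, combined with a trick in the same spirit, forces $\cos^2$ of every realised angle to avoid a neighbourhood of $\cos^2\alpha$. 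But this over-excludes. Instead, the right move is to sandwich: rather than equalities, observe that one cannot make a polynomial inequality, so one instead picks rationals approximating $\cos^2\alpha$ from both sides and exploits that $\cos^2\alpha$ irrational means it is never exactly rational.

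Concretely, here is the construction. For each rational $q$ let
$$R_q(x,y,z) = \langle y-x,\,z-x\rangle^2 - q\,|y-x|^2\,|z-x|^2,$$
a polynomial of degree $4$ in $3n$ rational-coefficient variables, and similarly $S_q(x,y,z,v) = \langle y-x,\,z-v\rangle^2 - q\,|y-x|^2\,|z-v|^2$ of degree $4$ in $4n$ variables. Now first suppose $\cos^2\alpha$ is irrational. Pick two sequences of rationals $q_k \uparrow \cos^2\alpha$ and $q_k' \downarrow \cos^2\alpha$. Apply Theorem~\ref{fo} to the countable family $\{R_{q_k}, S_{q_k}, R_{q_k'}, S_{q_k'} : k\in\N\}$ together with all the families needed for the other $\alpha'\in S$; this is still countably many rational polynomials of degree $\le 4$, so we get a compact $E$ of dimension $n/4$ on which none of these vanish. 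But that is not yet enough, since no single equality pins down $\cos^2\alpha$ when it is irrational. The degree-$8$ bound in the statement signals the actual trick: one multiplies two sandwiching polynomials. Take a single polynomial
$$T_\alpha(x,y,z) = R_{q}(x,y,z)\cdot R_{q'}(x,y,z)$$
with fixed rationals $q < \cos^2\alpha < q'$; this has degree $8$ and rational coefficients. If three distinct points of $E$ form the angle $\beta$, then $\cos^2\beta\notin\{q,q'\}$. That alone does not exclude $\beta=\alpha$. So instead one takes a \emph{sequence} of nested sandwiches $q_k < \cos^2\alpha < q_k'$ with $q_k, q_k' \to \cos^2\alpha$, forms $T_{\alpha,k} = R_{q_k}\cdot R_{q_k'}$ for each $k$, and applies Theorem~\ref{fo} to all of these (degree $8$) plus their $4n$-variable analogues $S_{q_k}\cdot S_{q_k'}$, over all $\alpha\in S$ and all $k$. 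On the resulting set $E$, for any distinct $x,y,z\in E$ and every $k$ we have $\cos^2(\angle yxz)\notin\{q_k,q_k'\}$. Since $q_k,q_k'$ can be taken to be \emph{all} rationals in shrinking neighbourhoods of $\cos^2\alpha$ on each side, and every neighbourhood of the irrational $\cos^2\alpha$ contains such rationals, this still does not forbid $\cos^2(\angle yxz)=\cos^2\alpha$ — which is the genuine obstacle.

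The resolution, and the point of degree $8$, is to exclude an \emph{open neighbourhood} rather than a point: choose a single pair of rationals $q < \cos^2\alpha < q'$ and exclude the configurations where $\cos^2\beta \notin (q,q')$, i.e.\ exclude $(\cos^2\beta - q)(\cos^2\beta - q') \geq 0$ — but this is an inequality, not an equation, so Theorem~\ref{fo} does not apply directly. One cannot exclude an inequality by excluding zero sets. Therefore the correct plan, which I would carry out, is: for each $\alpha\in S$, since $\cos^2\alpha$ is irrational (handle rational $\cos^2\alpha$ separately via Theorem~\ref{rac}, which already gives dimension $n/4 > n/8$ for those, or just fold them in), pick rationals $q_\alpha, q_\alpha'$ with $q_\alpha < \cos^2\alpha < q_\alpha'$ and such that the intervals $(q_\alpha, q_\alpha')$ over $\alpha\in S$ can be arranged; then for the countable family of rationals $\{q\} = \bigcup_\alpha\{q_\alpha, q_\alpha'\}$ apply Theorem~\ref{rac}'s polynomials $P_q, Q_q$ of degree $4$. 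This gives dimension $n/4$, not $n/8$ — so this is not it either. The degree doubling to $8$ must come from the product $R_{q_\alpha}\cdot R_{q_\alpha'}$: on a set $E$ where $R_{q_\alpha}\cdot R_{q_\alpha'} \neq 0$ for all distinct triples and all $\alpha$, we learn $\cos^2(\angle yxz)\neq q_\alpha$ and $\neq q_\alpha'$; choosing, for each $\alpha$, countably many such pairs with $q_\alpha^{(k)} \to \cos^2\alpha$ and $q_\alpha'^{(k)}\to\cos^2\alpha$, we exclude a dense set of values near $\cos^2\alpha$ from both sides but the angle set is at most countable minus... Actually the clean statement: the final theorem only claims we avoid angles \emph{in $S$}, a countable set; so it suffices, for each fixed $\alpha\in S$, to produce \emph{one} rational $q$ with $q\ne\cos^2\alpha$ is useless. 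The working plan: for each $\alpha$ choose rationals $a_\alpha<\cos^2\alpha<b_\alpha$; apply Theorem~\ref{fo} to the degree-$8$ polynomials $R_{a_\alpha}R_{b_\alpha}$ and $S_{a_\alpha}S_{b_\alpha}$ over all $\alpha\in S$; on the resulting $E$ of dimension $n/8$, if distinct $x,y,z\in E$ made angle $\alpha$ then $\cos^2\alpha = a_\alpha$ or $b_\alpha$, contradicting $a_\alpha<\cos^2\alpha<b_\alpha$. Hence $E$ avoids every $\alpha\in S$; the four-point version follows identically from $S_{a_\alpha}S_{b_\alpha}$. The main obstacle is seeing that the single product $R_{a_\alpha}R_{b_\alpha}$, with rationals straddling $\cos^2\alpha$, simultaneously kills the exact angle $\alpha$ while having rational coefficients — the price being degree $8$ instead of $4$.
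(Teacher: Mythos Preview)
Your final ``working plan'' contains a fatal logical error. Theorem~\ref{fo} produces a set $E$ on which $R_{a_\alpha}(x,y,z)\,R_{b_\alpha}(x,y,z)\neq 0$ for all distinct $x,y,z\in E$. This says precisely that $\cos^2(\angle yxz)\notin\{a_\alpha,b_\alpha\}$, which is perfectly compatible with $\cos^2(\angle yxz)=\cos^2\alpha$: by construction $\cos^2\alpha$ lies strictly between $a_\alpha$ and $b_\alpha$, hence equals neither. Your claimed implication ``then $\cos^2\alpha=a_\alpha$ or $b_\alpha$'' is the reverse of what the theorem yields. You actually diagnosed this correctly earlier (``this still does not forbid $\cos^2(\angle yxz)=\cos^2\alpha$''), and that objection applies equally to the final plan. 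More generally, any product of polynomials $R_{q_i}$ vanishes exactly when $\cos^2(\angle)\in\{q_1,\ldots,q_k\}$; excluding its zero set can only ever exclude angles with rational squared cosine, never an irrational $\alpha$. (Incidentally, using the product is strictly worse than listing the two factors separately at degree $4$, which already gives dimension $n/4$ and the same information.)

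The paper's proof uses a completely different idea that never refers to the specific value $\cos^2\alpha$. One applies Theorem~\ref{fo} to the two \emph{fixed} integer-coefficient polynomials
\[
P(a,b,c,d,e,f,g,h)=\sk{a-b,c-d}^2\abs{e-f}^2\abs{g-h}^2-\sk{e-f,g-h}^2\abs{a-b}^2\abs{c-d}^2
\]
in $8n$ variables (degree $8$) and its $6n$-variable analogue $Q$. The equation $P=0$ expresses that the pair of directions $(a-b,c-d)$ forms the \emph{same} angle as the pair $(e-f,g-h)$. Hence on the resulting $E$ of dimension $n/8$, each angle is realised by at most one triple and at most one quadruple of distinct points. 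For every $\alpha\in S$ one removes at most $3+4$ points to destroy those witnesses; what remains contains no triple or quadruple realising any $\alpha\in S$. A final appeal to the Frostman lemma (using the gauge-function estimate of Remark~\ref{dimremark}) extracts a compact subset of dimension $n/8$ from $E$ minus this countable set.
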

\begin{proof}
We apply Theorem~\ref{fo} for two polynomials of degree 8, $P:\R^{8n}\to\R$ and $Q:\R^{6n}\to\R$, where
$$P(a,b,c,d,e,f,g,h)=\sk{a-b,c-d}^2 \abs{e-f}^2 \abs{g-h}^2 - \sk{e-f,g-h}^2\abs{a-b}^2 \abs{c-d}^2$$
$$Q(a,b,c,e,f,g)=\sk{a-b,c-b}^2 \abs{e-f}^2 \abs{g-f}^2 - \sk{e-f,g-f}^2\abs{a-b}^2 \abs{c-b}^2.$$
We obtain a compact set $E\subset \R^n$ of Hausdorff dimension $n/8$ such that
there are no distinct points $a,b,c,d,e,f,g,h$ in $E$ satisfying any of the equations
\beq\label{8as}
\frac{\sk{a-b, c-d}}{\abs{a-b}\abs{c-d}} = \frac{\sk{e-f, g-h}}{\abs{e-f}\abs{g-h}}
\eeq
or
\beq\label{6os}
\frac{\sk{a-b, c-b}}{\abs{a-b}\abs{c-b}} = \frac{\sk{e-f, g-f}}{\abs{e-f}\abs{g-f}}.
\eeq

For each $\alpha$, $E$ may contain three points $a,b,c$ forming the angle $\alpha$, and/or four distinct points $a,b,c,d$ for which $a-b$ an d $c-b$ form the angle $\alpha$. However, if we remove these points (at most $3+4$) from $E$, then the remaining set will not contain three points forming the angle $\alpha$ nor four distinct points determining two directions forming the angle $\alpha$.

As these polynomials have degree $8$, we obtain a compact set $E\subset \R^n$ of Hausdorff dimension $n/8$. This set may contain angles belonging to $S$. But as \eqref{8as} and \eqref{6os} have no solutions in $E$, if we remove at most $4+3$ points from $E$ for each $\alpha\in S$, then $E$ will not contain three points forming an angle $\alpha\in S$ or four distinct points forming two directions of angle $\alpha\in S$. So if $T$ is the countable set we remove (considering all $\alpha\in S$), then $E\setminus T$ satisfies all properties of the theorem except for compactness.
Notice that, by Remark~\ref{dimremark}, $E$ and $E\setminus T$ has positive Hausdorff measure with respect to the gauge function $\phi(r)= r^{n/8} (\log r^{-1})^{n+1}$. By Frostman lemma (see \cite{Mattila}), we can find a compact subset of $E\setminus T$ which has positive $\phi$-measure, and thus, is of dimension $n/8$.
\end{proof}

Note that for $[0,1]\subset\R^n$, the angle set $\iA([0,1])=\{0,\pi\}$.

\begin{theorem}\label{n/6}
Let $n\ge 2$. There exists a compact set $E$ of Hausdorff dimension $n/6$ for which the angle set $\iA(E)$ has zero Lebesgue measure.
\end{theorem}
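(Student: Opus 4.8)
The plan is to express the condition ``$\iA(E)$ has zero Lebesgue measure'' via a countable family of polynomial equations of degree $6$ and then invoke Theorem~\ref{fo}. Fix a countable dense set $Q\subset[0,\pi]$, say all $\alpha$ with $\cos\alpha\in\Q$, together with a summable sequence of radii $\delta_k\downarrow 0$; I want to cover a Lebesgue-null (in fact, by choosing $\delta_k$ so that $\sum_\alpha \delta_{k(\alpha)}<\infty$ for a suitable enumeration, null) ``exceptional'' set of angles and show that every other angle is excluded. The point is that the angle $\angle yxz=\alpha$ is governed by the sign/value of $\sk{y-x,z-x}^2-\cos^2\alpha\,|y-x|^2|z-x|^2$, a polynomial of degree $4$ in $(x,y,z)\in\R^{3n}$; to push to the three-variable (in the angle) freedom and gain the measure-zero conclusion I would instead compare \emph{three} cosines against each other, i.e.\ use a degree-$6$ polynomial in $(x,y,z,u,v,w)\in\R^{6n}$ of the form
\[
R(x,y,z,u,v,w)=\sk{y-x,z-x}^2\,|u-v|^2-\text{(symmetrised analogue)},
\]
more precisely one forcing $\cos^2(\angle yxz)$ to equal a fixed rational value, summed over a countable dense set of rationals, \emph{exactly as in Theorem~\ref{rac}}, but upgraded to degree $6$ so as to also control directions and to make the excluded set of $\cos^2\alpha$ a full-measure subset of $[0,1]$ rather than just $\Q$.

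Concretely, here is the cleaner route I would actually carry out. For each rational $q\in[0,1]$ consider
\[
P_q(x,y,z)=\big(\sk{y-x,z-x}^2-q\,|y-x|^2|z-x|^2\big)\cdot\big(|y-x|^2-|z-x|^2\big),
\]
a polynomial of degree $6$ in $3n$ variables with rational coefficients, and similarly a $6n$-variable analogue $\widetilde P_q$ built from two directions $a-b$ and $c-d$. Apply Theorem~\ref{fo} to the countable family $\{P_q,\widetilde P_q:q\in\Q\cap[0,1]\}$ to get a compact $E\subset\R^n$ with $\dim_H E=n/6$. By construction, for any three distinct points $x,y,z\in E$ one has $|y-x|\ne|z-x|$ \emph{and} $\cos^2(\angle yxz)\notin\Q$ — wait, that is too strong and in fact impossible; instead the correct reading is that the product vanishing is excluded, so for each $q$ at least one factor is nonzero, which does not immediately bound the angle set. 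So I would drop the product trick and instead use the genuinely six-point comparison polynomial
\[
R(x,y,z,u,v,w)=\sk{y-x,z-x}^2|v-u|^2|w-u|^2-\sk{v-u,w-u}^2|y-x|^2|z-x|^2,
\]
degree $6$ in $6n$ variables, whose vanishing says $\angle yxz=\angle vuw$ (or supplementary). Applying Theorem~\ref{fo} to $R$ and its direction-variant gives a compact $E$, $\dim_H E=n/6$, in which \emph{no two distinct triples of points subtend the same angle}. Then the map $(x,y,z)\mapsto\angle yxz$ from the set of ordered distinct triples of $E$ into $[0,\pi]$ is (outside a controlled exceptional set) injective; since the domain is a subset of $(\R^n)^3$ and the target is $1$-dimensional, an injective sufficiently-regular map cannot have image of positive measure unless — and here is where one needs a genuine argument, not mere cardinality — one uses that $E$ has the positive-measure Frostman property with respect to a gauge and that the angle function is locally Lipschitz off the degenerate locus.

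The key steps, in order: (1) write down the degree-$6$ polynomial(s) $R$ (and a $4n$-variable and $6n$-variable family for the degenerate cases where points coincide or directions are parallel, handled as in Theorem~\ref{nper8} by removing finitely many points per parameter and then passing to a compact subset via Frostman/Remark~\ref{dimremark}); (2) apply Theorem~\ref{fo} to obtain compact $E$ with $\dim_H E=n/6$ in which the angle (and direction-angle) function is injective on distinct triples; (3) deduce $|\iA(E)|=0$ from injectivity by a Fubini/co-area argument: fibre $E^3$ over $[0,\pi]$ by the angle function, and observe that if $|\iA(E)|>0$ then almost every fibre is nonempty, but a nonempty fibre of an injective map is a single point, contradicting that the fibres would have to carry, in aggregate, the full ``mass'' of $E^3$ — equivalently, parametrise angles by $\cos^2\alpha\in[0,1]$ and use that the preimage under the polynomial map of a positive-measure set cannot be a finite union of lower-dimensional real-algebraic sheets unless it has positive dimension, contradicting injectivity together with $\dim_H E=n/6>0$. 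I expect step (3) — converting ``no repeated angle'' into ``the angle set is null'' — to be the main obstacle: the soft cardinality statement is immediate, but rigorously ruling out a positive-measure angle set requires either a dimension count showing that a fibre over a typical angle is a positive-dimensional algebraic set (whence two distinct triples subtending that angle exist in $E$, since $E$ has positive measure for a gauge just below $r^{n/6}$), or an explicit construction in Theorem~\ref{fo} guaranteeing that the excluded rational angle-comparisons already cover a dense set and the remaining set of realisable angles is contained in a null algebraic set. Accordingly I would aim to prove the sharper statement that $\iA(E)$ is contained in a countable union of zero-sets of nonconstant one-variable polynomials in $\cos\alpha$ — automatically Lebesgue null — which follows once $E$ avoids, for each rational $q$, any triple with $\cos^2(\angle yxz)=q$, i.e.\ going back to $P_q(x,y,z)=\sk{y-x,z-x}^2-q|y-x|^2|z-x|^2$ of degree $4$; the extra two degrees (to reach $6$) are spent exactly on the ``Moreover'' direction-clause, matching the pattern of Theorems~\ref{rac} and~\ref{nper8}.
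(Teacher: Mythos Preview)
Your proposal has a genuine gap, and the approach via Theorem~\ref{fo} cannot be completed along the lines you sketch.

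The core problem is this: Theorem~\ref{fo} lets you exclude \emph{countably many} polynomial configurations, but ``$\iA(E)$ is Lebesgue null'' is an uncountable constraint. Your two concrete attempts both fail for this reason. The family $P_q(x,y,z)=\sk{y-x,z-x}^2-q|y-x|^2|z-x|^2$, $q\in\Q\cap[0,1]$, excludes only the countably many angles with $\cos^2\alpha\in\Q$; the remaining set of permitted angles still has full measure in $[0,\pi]$, so this says nothing about $|\iA(E)|$. The degree-$6$ comparison polynomial $R$ does give $\dim_H E=n/6$ and forces the angle map on (disjoint) triples to be injective, but injectivity of a map into $\R$ in no way implies the image is null: a priori nothing prevents $E$ from realising each $\alpha\in[0,\pi]$ exactly once. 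Your proposed rescues in step~(3) do not work. The ``Fubini/co-area'' sketch has no contradiction in it (single-point fibres over a positive-measure base are perfectly consistent), and the ``positive-dimensional fibre forces two triples in $E$'' claim would need $E^3$, a set of Hausdorff dimension $n/2$ inside $\R^{3n}$, to meet a given algebraic hypersurface in at least two points---there is no such principle.

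The paper does \emph{not} use Theorem~\ref{fo} here. Instead it builds $E$ directly by a Falconer-type lattice construction: with $N_i\uparrow\infty$ rapidly, set $A_i=\{x\in[0,1]:\|xN_i\|\le N_i^{-6}/i^4\}$, $B_i=A_i^n$, and $E=\bigcap_i B_i$. For $a,b,c,d\in B_i$ the inner product $\sk{a-b,c-d}$ lies within $nN_i^{-6}/i^4$ of a point of $N_i^{-2}\Z$; since $\cos(\angle yxz)$ is a ratio of three such inner products (with square roots), the set of cosines realised by triples in $B_i$ with $|y-x|^2,|z-x|^2\ge 1/i$ is covered by $O(N_i^6)$ intervals of total length $O(i^{-2})$. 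Borel--Cantelli then gives $|\iA(E)|=0$, and the exponent $6$ in $N_i^{-6}$ is exactly what makes $\dim_H E=n/6$. The key difference from your approach is that this construction controls \emph{all} angles simultaneously at each scale, rather than excluding a countable list of them.
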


The construction is similar to Falconer's original construction of a set of dimension $n/2$ for which the distance set is Lebesgue null \cite{Falc}. However, the argument here is more complicated.
\begin{proof}
Let $N_i$ be a rapidly increasing sequence of positive integers.
%, say, $N_{i+1}\ge N_i^i$.
For $x\in \R$, let $\norm{x}$ denote the distance to the nearest integer.

Let $A_i=\{x\in [0,1] \, : \, \norm{xN_i}\le N_i^{-6}/i^4 \}$ (the power of $i$ is chosen for convenience). Consider products of the set $A_i$ and let $$B_i=A_i\times \cdots \times A_i\subset \R^n.$$
Suppose that $a,b,c,d\in B_i$. Then $\sk{a-b,c-d}\in[-n,n]$ lies in the $nN_i^{-6}/i^4$ neighbourhood of some $j/N_i^2$ where $j$ is an integer with $|j| \le N_i^2 n$.
Therefore 
$$\sk{B_i-B_i, B_i-B_i}= \{\sk{a-b,c-d}\,:\,a,b,c,d\in B_i\}$$ can be covered by $2N_i^2n+1\le 3N_i^2 n$ many intervals of length $2nN_i^{-6}/i^4$. As $\sk{B_i,B_i}\subset [-n,n]$, a  short calculation yields that
$$C_i=\left\{\frac{\sk{a-b,c-b}}{\sqrt{\abs{\sk{a-b,a-b}}} \sqrt{\abs{\sk{c-b,c-b}}}} \,:\, a,b,c\in B_i, \,
|a-b|^2\ge 1/i, \, |c-b|^2\ge 1/i \right\}$$
can be covered by $(3N_i^2n)^3$ many intervals of length $3 ni^2 (2nN_i^{-6}/i^4)$. Thus \beq\label{mertekCi}
\lambda(C_i) \le 162 n^5 /i^2.
\eeq

Let $B=\bigcap_{i=1}^\infty B_i$. Standard arguments imply that the Hausdorff dimension of $B$ is $n/6$ (or see Lemma~\ref{dimlemma}).

Let $x,y,z\in B$ be three distinct points, let $\alpha=\angle yxz$.
Then $$\cos\alpha= \frac{\sk{y-x, z-x}}{\abs{y-x}\abs{z-x}} \in \bigcap_{k=1}^\infty \bigcup_{i=k}^\infty C_i,$$
and $\bigcap \bigcup C_i$ is Lebesgue null by \eqref{mertekCi}. Therefore $\iA(B)$ is Lebesgue null.
\end{proof}

Any result for the distance set problem gives a result for the angle set problem as well. Independently from the author, this was observed also by Iosevich, Mourgoglou and Senger. %One example is the following.

\begin{theorem}\label{angledist}
Let $E\subset\R^n$ be a Borel set with $\dim_H E>n/2+4/3$. 
Then $\iA(E)$ has positive Lebesgue measure.
\end{theorem}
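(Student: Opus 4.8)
The plan is to deduce this from the distance--set estimate of Wolff and Erdo\u{g}an quoted in the introduction: every analytic set $A\subset\R^n$ ($n\ge2$) with $\dim_H A>n/2+1/3$ has $D(A)$ of positive Lebesgue measure. First note that the hypothesis $\dim_H E>n/2+4/3$ is vacuous unless $n\ge3$, since for $n\le2$ one has $\dim_H E\le n\le n/2+4/3$; so assume $n\ge3$. The shape of the exponent, $n/2+4/3=(n/2+1/3)+1$, suggests losing exactly one dimension, and the right way to do that here is to restrict $E$ to a sphere $S(x_0,r)$ centred at a point $x_0\in E$: on such a sphere the angle subtended at $x_0$ by two points is a fixed diffeomorphic function of the Euclidean distance between them, so a distance set of positive measure for the slice produces a subset of $\iA(E)$ of positive measure.

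Carrying this out, I would fix $t$ with $n/2+4/3<t<\dim_H E$ and use Frostman's lemma to get a compact set $E'\subset E$ carrying a probability measure $\mu$ with $\mu(B(x,\rho))\le C\rho^{t}$ for all $x$ and $\rho>0$; thus $t-1>n/2+1/3$. I would then apply the slicing/disintegration machinery for the sphere family $\{\,y:|y-x_0|=r\,\}$ --- the analogue of the classical hyperplane--slicing results (see \cite{Mattila}) --- to obtain a centre $x_0\in E'\subset E$ and a radius $r>0$ for which the conditional measure of $\mu$ on $S(x_0,r)$ has finite $(t-1)$--energy; consequently $F:=E\cap S(x_0,r)$ is a Borel (hence analytic) subset of $\R^n$ with $\dim_H F\ge t-1>n/2+1/3$. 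The Wolff--Erdo\u{g}an theorem applied to $F$ then gives that $D(F)$ has positive Lebesgue measure.

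Finally I would convert distances in $F$ to angles of $E$. For distinct $y,z\in F\subset S(x_0,r)$ the chord--angle relation reads $|y-z|=2r\sin\!\big(\tfrac{1}{2}\angle yx_0z\big)$, i.e.\ $\angle yx_0z=2\arcsin\!\big(|y-z|/(2r)\big)$, and $\ell\mapsto 2\arcsin(\ell/(2r))$ is a $C^\infty$ diffeomorphism of $(0,2r)$ onto $(0,\pi)$. Since $D(F)\subset[0,2r]$ has positive Lebesgue measure, so has $D(F)\cap(0,2r)$, and hence so has its image under this diffeomorphism; every point of that image equals $\angle yx_0z$ for some distinct $y,z\in F\subset E\setminus\{x_0\}$, hence lies in $\iA(E)$. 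Therefore $\iA(E)$ has positive Lebesgue measure.

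The step I expect to be the main obstacle is the slicing one: one needs the sphere--slicing theorem in the form allowing the centre to vary over $E$ itself --- that is, for $\mu$--almost every $x_0$, and not merely for Lebesgue--almost every $x_0\in\R^n$ --- together with the slice--dimension lower bound $\dim_H E-1$ (up to an arbitrarily small loss, which is harmless here). This is classical but technical; the remaining ingredients, namely Frostman's lemma, the quoted distance--set theorem, and the elementary chord--angle identity, are routine. As noted just before the statement, the general principle that distance--set results imply angle--set results was also observed independently by Iosevich, Mourgoglou and Senger.
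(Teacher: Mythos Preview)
Your core idea---relate angles at a vertex $x\in E$ to chord lengths on a sphere about $x$, then invoke the Wolff--Erdo\u{g}an distance-set bound---is exactly the paper's. The difference lies in how you produce a large-dimensional set on a sphere: you \emph{slice} $E$ by concentric spheres, whereas the paper simply \emph{radially projects} $E$ onto a single sphere. Fix any $x\in E$, let $S$ be the unit sphere about $x$, and let $F\subset S$ be the radial projection of $E\setminus\{x\}$. In polar coordinates about $x$ the set $E\setminus\{x\}$ sits (locally bi-Lipschitz) inside $(0,\infty)\times F$, whence $\dim_H E\le 1+\dim_H F$; thus $\dim_H F\ge\dim_H E-1>n/2+1/3$, so $D(F)$ has positive measure, and your chord--angle computation (with $r=1$) finishes. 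No Frostman measure, no slicing theorem, and \emph{any} $x\in E$ works.

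The obstacle you flag in the slicing route is genuine, and projection bypasses it entirely. For a \emph{fixed} centre $x_0$, slicing by the spheres $S(x_0,r)$ becomes, in polar coordinates, slicing by hyperplanes in one fixed direction; the Marstrand--Mattila theorems guarantee slice dimension $\ge s-1$ only for \emph{generic} directions, and for a single direction the conclusion can fail (already in the plane the graph of a Weierstrass function has dimension $>1$ while every slice in the domain direction is a singleton; the spherical analogue is a rough curve $r\mapsto x_0+r\,g(r)$, which meets each $S(x_0,r)$ in a single point yet can have large Hausdorff dimension). So one really would need a spherical-slicing theorem with the centre varying over $E$, as you suspect; this can presumably be extracted from Mattila's distance-set machinery, but it is considerably heavier than the one-line projection bound the paper uses.
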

\begin{proof}
Take a radial projection of $E$ to a sphere $S$ of unit radius with center $x\in E$, and let $F\subset S$ be the set obtained. Clearly $\dim_H F\ge \dim_H E -1$, and thus the distance set $D(F)$ has positive Lebesgue measure by the results of Wolff and Erdo\u{g}an.
If $F$ contains two points $y,z$ with distance $d$, then $\alpha=\angle yxz$ 
satisfies $d=2\sin(\alpha/2)$, and $\alpha \in \iA(E)$. Therefore $\iA(E)$ has positive Lebesgue measure. 
\end{proof}

\section{Collinearity and directions}\label{5}
\begin{theorem}\label{kollin}
Let $n\ge 2$. There exists a compact set $E\subset \R^n$ of Hausdorff dimension $n/2$ such that $E$ does not contain three collinear points, moreover, every direction is realised by $E$ at most once.
\end{theorem}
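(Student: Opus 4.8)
The plan is to extract from the two desired conclusions a short list of degree-$2$ polynomial configurations to be excluded, and then invoke Theorem~\ref{fo}. First I would note that ``every direction realised at most once'' already subsumes ``no three collinear points'': three distinct collinear points $x,y,z$ give two distinct pairs $\{x,y\}\neq\{x,z\}$ with $x-y$ parallel to $x-z$. So it suffices to guarantee that $E$ contains no two distinct pairs $\{x,y\}\neq\{z,v\}$ of (distinct) points with $x-y$ parallel to $z-v$. Such two pairs either involve four distinct points, or share exactly one point, in which case -- after relabelling -- they give three distinct points $x,y,z$ with $x-y$ parallel to $x-z$, i.e.\ three distinct collinear points. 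Hence it is enough to prevent: (a) three distinct points $x,y,z\in E$ being collinear; (b) four distinct points $x,y,z,v\in E$ with $x-y$ parallel to $z-v$.

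The point that makes dimension $n/2$ (rather than $n/4$) available is that each of (a), (b) forces the vanishing of a \emph{single} $2\times 2$ minor, hence of a single degree-$2$ polynomial. Since $n\ge 2$, if $x-y$ is parallel to $z-v$ then
\[
P(x,y,z,v):=(x_1-y_1)(z_2-v_2)-(x_2-y_2)(z_1-v_1)=0,
\]
and if $x,y,z$ are collinear then
\[
Q(x,y,z):=(x_1-z_1)(y_2-z_2)-(x_2-z_2)(y_1-z_1)=0,
\]
because in each case the relevant $2\times n$ matrix of coordinate differences has rank at most $1$, so every $2\times 2$ minor vanishes. Here $P$ and $Q$ are polynomials with integer coefficients in $4n$ and $3n$ variables respectively, both of degree $2$, and neither is identically zero. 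I would then apply Theorem~\ref{fo} to the family $\{P,Q\}$ with $d=2$, obtaining a compact set $E\subset\R^n$ with $\dim_H E=n/2$ that contains no three distinct points with $Q=0$ and no four distinct points with $P=0$. By the reduction above, such $E$ has no three collinear points and realises every direction at most once; in fact, since $P=0$ only constrains the projections to the first coordinate plane, $E$ even has the formally stronger property that no two distinct pairs of its points have parallel such projections.

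I do not expect a real obstacle here -- the substance is entirely in Theorem~\ref{fo}. The only subtlety worth flagging is that Theorem~\ref{fo} requires the points of an excluded configuration to be pairwise distinct, which is exactly why the collinear case is handled by the separate three-variable polynomial $Q$ rather than being absorbed into $P$ by letting two of the four points coincide. One could equally well throw in all $2\times 2$ minors $(x_i-y_i)(z_j-v_j)-(x_j-y_j)(z_i-v_i)$ and their three-point analogues: this changes nothing, since any parallel or collinear configuration makes every one of them vanish, so excluding the zero set of a single minor already suffices.
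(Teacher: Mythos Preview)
Your proof is correct and follows essentially the same approach as the paper: both reduce to applying Theorem~\ref{fo} to the single $2\times 2$ minor in the first two coordinates, once for four distinct points and once for three, and both exploit that parallelism of two vectors in $\R^n$ forces the vanishing of this degree-$2$ minor. Your additional remarks---that the direction condition already implies no three collinear points, and why the three-point polynomial $Q$ is needed separately from $P$ because of the distinctness hypothesis in Theorem~\ref{fo}---are correct and make the logic a bit more explicit than the paper's terse version.
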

\begin{proof}
Let $x,y,z,v\in\R^n$, and consider the polynomials $P:\R^{4n}\to \R$, $Q:\R^{3n}\to\R$,
$$P(x,y,z,v)=(y_2-x_2)(v_1-z_1)-(y_1-x_1)(v_2-z_2),$$
$$Q(x,y,z)=(y_2-x_2)(y_1-z_1)-(y_1-x_1)(y_2-z_2).$$
If $y-x$ and $v-y$ are parallel, then their projection to the plane spanned by the first two coordinate axes are also parallel, and thus $P(x,y,z,v)=0$. (We consider the $0$ vector to be parallel to every vector.) Similarly, if $x,y,z$ are collinear, $Q(x,y,z)=0$. Applying Theorem~\ref{fo} for $P$ and $Q$ we obtain the result.
\end{proof}

This statement can be strengthened.

\begin{theorem}\label{kollin2}
Let $n\ge 2$. There exists a compact set $E\subset \R^n$ of Hausdorff dimension $n-1$ such that $E$ does not contain three collinear points, moreover, every direction is realised by $E$ at most once.
\end{theorem}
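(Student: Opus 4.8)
The goal is to push the dimension in Theorem~\ref{kollin} from $n/2$ up to $n-1$, which is the value dictated by Mattila's theorem (quoted in Remark~\ref{fosharp}): any analytic set of dimension $>n-1$ meets some hyperplane in a positive-dimensional set, hence contains $n+1$ points of a hyperplane, and in particular three collinear points. So $n-1$ is sharp, and Theorem~\ref{fo} already delivers it for collinearity in $\R^d$ with $n=d$ via the $d\times d$ determinant polynomial. The issue is twofold: (i) we want this in $\R^n$ for all $n\ge 2$, not just when the ambient dimension equals the degree of the determinant; and (ii) we simultaneously want each \emph{direction} realised at most once, which is a condition on pairs of pairs of points. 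The plan is to realise $E$ as a product $E = F \times [0,1]^{n-2}$ (or rather $F$ times a small cube) where $F\subset\R^2$ is obtained from Theorem~\ref{fo}, exploiting that collinearity and parallelism in $\R^n$ project to collinearity and parallelism in the first two coordinates, exactly as in the proof of Theorem~\ref{kollin} — except that the factor $[0,1]^{n-2}$ contributes dimension $n-2$ for free, so we only need $\dim_H F \ge 1$, which is the \emph{full} dimension of $\R^2$ and beyond what Theorem~\ref{fo} gives for a degree-$2$ polynomial.

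So the real work is to construct a compact $F\subset\R^2$ of Hausdorff dimension $1$ (or positive $\phi$-measure for a suitable gauge $\phi(r)=r(\log r^{-1})^k$, to allow removing countably many points later and still invoke Frostman) containing no three collinear points and realising every direction at most once. Here I would \emph{not} use Theorem~\ref{fo} directly on the polynomial $P(x,y,z,v)$, since degree $2$ only yields dimension $1/2$. Instead I would imitate Remark~\ref{fonemsharp}: apply Theorem~\ref{fo} to a \emph{linear} (degree-$1$) family of polynomials over a clever coordinate system and then transport by a diffeomorphism. Concretely, parametrise points of $\R^2$ by a curve or graph $t\mapsto (t, g(t))$ with $g$ strictly convex (say $g(t)=t^2$, or better a transcendental $g$ like $e^t$): three points $(t_i,g(t_i))$ are collinear iff the divided differences coincide, $\frac{g(t_1)-g(t_2)}{t_1-t_2}=\frac{g(t_2)-g(t_3)}{t_2-t_3}$, and two chords are parallel iff two such divided differences agree. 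Choosing $g$ so that these become polynomial (or rational) conditions in the $t_i$ with rational coefficients that are \emph{never} identically satisfied, one applies Theorem~\ref{fo} in the one-dimensional $t$-space to a set $T\subset\R$ of dimension $1$, then takes $F = \{(t,g(t)) : t\in T\}$; since $g$ is bi-Lipschitz on compact subintervals, $\dim_H F = \dim_H T = 1$. The strict convexity of $g$ guarantees no \emph{three} points are collinear automatically and reduces "direction realised at most once" to finitely many divided-difference equations; one must check these equations are nondegenerate so Theorem~\ref{fo} (which needs $P_j\not\equiv 0$) applies.

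Assembling: let $F\subset\R^2$ be as above with $\dim_H F=1$, set $E_0 = F\times C$ where $C\subset\R^{n-2}$ is, e.g., a compact set of dimension $n-2$ (or just $[0,1]^{n-2}$), so $\dim_H E_0 = \dim_H F + (n-2) = n-1$ by the product formula for Hausdorff dimension (using that one factor is "nice", i.e.\ has equal Hausdorff and packing dimension, or has positive measure in a gauge). Collinear points of $E_0$ would project to collinear points of $F$ in the first two coordinates — forbidden — and a direction in $\R^n$ realised twice projects to a direction in $\R^2$ realised twice by $F$, unless the projected direction is the zero vector, i.e.\ the two segments lie in a single fiber $\{(p,q)\}\times\R^{n-2}$; to kill that remaining case one chooses $C$ to itself avoid this, e.g.\ $C$ a set in which no direction is realised twice and no three points collinear — which for $n-2=1$ is trivial ($C\subset\R$) and for general $n-2$ follows by induction on $n$ from the theorem being proved, taking $C$ one dimension lower is not quite right, so instead take $C\subset\R^{n-2}$ to be a line segment's worth only if $n-2=1$; for $n-2\ge 2$ apply the theorem inductively to get $C\subset\R^{n-2}$ of dimension $n-3$...

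The cleaner route, which I would actually follow, avoids the fiber problem entirely: take $C\subset\R^{n-2}$ to be an \emph{affinely independent Cantor-type set of dimension $n-2$} lying on a strictly convex hypersurface (same trick, one level up), so that $E_0=F\times C$ has the property that the projection to the first two coords separates any two distinct points enough that equal directions in $\R^n$ force equal directions in $\R^2$; combined with $F$'s properties this gives the result with $\dim_H E_0 = n-1$. Finally, if the construction via Frostman introduces the need to delete a null/countable set (to handle boundary cases of the divided-difference equations, exactly as in the proof of Theorem~\ref{nper8}), one works with the gauge $\phi(r)=r^{n-1}(\log r^{-1})^{k}$, notes $E_0$ has positive $\phi$-measure by Remark~\ref{dimremark}, removes the countably many offending points, and extracts a compact positive-$\phi$-measure subset by Frostman's lemma, which is then the desired $E$.

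\medskip

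\emph{Main obstacle.} The crux is manufacturing $F\subset\R^2$ of \emph{full} dimension $1$ with no three collinear points and every direction at most once — Theorem~\ref{fo} cannot do this directly for the degree-$2$ collinearity polynomial, so one must find the right diffeomorphism (strictly convex graph) that turns collinearity/parallelism into low-degree polynomial equations in a \emph{linear-dimensional} parameter and verify those equations are not identically zero, so Theorem~\ref{fo} (via Proposition~\ref{foprop} or its derivative-free version) yields a $t$-set of dimension $1$. The secondary obstacle is the product-set bookkeeping: ensuring the factor $C$ does not reintroduce collinear triples or repeated directions through segments that are "vertical" over the first two coordinates, which I would handle by also placing $C$ on a strictly convex hypersurface (or, for $n=2$, taking $C$ trivial and for $n=3$ taking $C\subset\R$ trivial, then inducting).
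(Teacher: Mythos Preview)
Your proposal has a real gap, and it stems from two missteps.

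First, a minor point: you claim that Theorem~\ref{fo} applied to the degree-$2$ collinearity/parallelism polynomials ``only yields dimension $1/2$''. That is wrong if you work in the plane: with $n=2$ and $d=2$ you get $n/d=1$. This is exactly Theorem~\ref{kollin} specialised to $n=2$, so the compact set $F\subset\R^2$ of Hausdorff dimension $1$ with no three collinear points and every direction realised at most once is already available --- your convex-graph detour to manufacture such an $F$ is unnecessary.

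Second, and this is the fatal issue: the product construction $E_0 = F\times C$ cannot work. To reach $\dim_H E_0 = n-1$ with $\dim_H F = 1$ you need $\dim_H C = n-2$, i.e.\ $C$ has full dimension in $\R^{n-2}$. But for $n\ge 3$ any such $C$ contains three collinear points: for $n=3$ this is trivial ($C\subset\R$), and for $n\ge 4$ it follows from Mattila's theorem (the very result quoted after Theorem~\ref{kollin2}) since $\dim_H C = n-2 > (n-2)-1$. Hence $F\times C$ contains the collinear triple $(p,c_1),(p,c_2),(p,c_3)$ in a single fibre $\{p\}\times C$. Your ``cleaner route'' of placing $C$ on a strictly convex hypersurface does not help: a strictly convex hypersurface in $\R^{n-2}$ has dimension $n-3$, so $C$ cannot simultaneously lie on it and have dimension $n-2$. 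The induction you sketch also collapses for the same reason.

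The paper avoids the fibre problem by using a \emph{graph} rather than a product. Starting from the planar set $E_2\subset\R^2$ of Theorem~\ref{kollin}, one takes a continuous function $f:E_2\to\R^{n-2}$ whose graph $E=\{(x,f(x)):x\in E_2\}\subset\R^n$ has Hausdorff dimension $\dim_H E_2 + (n-2) = n-1$ (such $f$ exist over any uncountable compact domain). The projection to the first two coordinates is then a bijection of $E$ onto $E_2$, so distinct points of $E$ project to distinct points of $E_2$; collinear triples and repeated directions in $E$ would project to the same in $E_2$, which is impossible. That single idea --- graph instead of product --- is what your proposal is missing.
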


Note that the unit sphere $S^{n-1}\subset \R^n$  satisfies the first property: it has dimension $n-1$ and does not contain three collinear points.

\begin{proof}[Sketch of proof.]
For $n=2$ we are done by Theorem~\ref{kollin}. Assume that $n\ge 3$. Let $E_2\subset\R^2$ be the compact set obtained from Theorem~\ref{kollin}. 

It is well known that there is a continuous function $f:[0,1]\to \R$ such that the graph of $f$ (a subset of $\R^2$) has Hausdorff dimension $2$.
 (In fact, Theorem~\ref{fo} also implies the existence of such function by using the polynomial $P(x,y)=y_1-x_1$, and then applying Tietze's extension theorem.)

A straightforward generalization is that for every uncountable compact set $A\subset \R^k$ there is a continuous function $f:A\to \R^m$ such that the graph, a compact subset of $\R^{k+m}$, has Hausdorff dimension $\dim_H A + m$. We apply this result for $k=2$, $A=E_2$, $m=n-2$. We obtain a compact set $E\subset \R^n$ of dimension $n-1$ such that $E$ projects injectively to $E_2\subset \R^2$ (spanned by the first two coordinate axes). Then $E$ does not contain three collinear points, nor four distinct points $x,y,z,v$ such that $y-x$ and $v-z$ are parallel.
\end{proof}

\begin{remark}
Theorem~\ref{kollin2} is sharp. For every Borel set $A\subset \R^n$ with $\dim_H A>n-1+s$ there are lines $L$ such that $\dim_H (A\cap L) \ge s$ (\cite{Mattila}).
\end{remark}

\section{Distance sets}\label{6}

\begin{theorem}\label{dist1}
Let $n\ge 1$. There exists a compact set $E\subset\R^n$ of Hausdorff dimension $\max(1, n/2)$ such that the distance set is a nullset, $E$ does not contain rational distances, moreover, every distance is realised at most once by $E$.
\end{theorem}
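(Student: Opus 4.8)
The plan is to reduce everything to a single application of Theorem~\ref{fo} for a carefully chosen finite/countable family of polynomials with rational coefficients, followed by a truncation argument (exactly as in the proof of Theorem~\ref{nper8}) to remove the finitely many exceptional configurations that Theorem~\ref{fo} cannot prevent, and then a Frostman-type argument to recover compactness without losing dimension. First I would handle the three requirements separately. To forbid rational distances: for each rational $q>0$ apply Theorem~\ref{fo} to the polynomial $P_q(x,y)=\abs{x-y}^2 - q^2 = \sum_{i=1}^n (x_i-y_i)^2 - q^2$ in $2n$ variables of degree $2$. To ensure every distance is realised at most once, I would forbid four distinct points $x,y,z,v$ with $\abs{x-y}=\abs{z-v}$, i.e. use $R(x,y,z,v)=\abs{x-y}^2-\abs{z-v}^2$, again degree $2$ in $4n$ variables; but one must be slightly careful, since "realised at most once" as a \emph{set} of unordered pairs allows the pair $\{x,y\}$ to equal the pair $\{z,v\}$, so the honest statement to exclude is: no four points $x,y,z,v$ with $\{x,y\}\cap\{z,v\}=\emptyset$ and $\abs{x-y}=\abs{z-v}$, which is what $R$ together with the distinctness hypothesis of Theorem~\ref{fo} gives. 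Finally, for the distance set to be Lebesgue null I cannot use Theorem~\ref{fo} directly; instead I would run a Falconer-type construction as in Theorem~\ref{n/6}: take a rapidly increasing sequence $N_i$, let $A_i=\{x\in[0,1]:\norm{xN_i}\le N_i^{-2}/i^2\}$, $B_i=A_i^n$, and observe that for $a,b\in B_i$ the value $\abs{a-b}^2$ lies within $O(N_i^{-2}/i^2)$ of a multiple of $N_i^{-2}$ in $[0,n]$, so $D(B_i)^2$ (hence, after a bounded-derivative change of variables $t\mapsto\sqrt t$ away from $0$, $D(B_i)$) is covered by $O(N_i^2)$ intervals of total length $O(1/i^2)$; then $B=\bigcap B_i$ has $\dim_H B = n/2$ (by the standard computation, or Lemma~\ref{dimlemma}) and $D(B)$ is Lebesgue null.

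The genuine difficulty is combining these three constructions: the Falconer construction gives the specific set $B$ with null distance set, while Theorem~\ref{fo} delivers \emph{some} set $E_0$ of dimension $n/2$; I need one set enjoying all properties simultaneously. The natural fix is to redo the proof of Proposition~\ref{foprop}/Theorem~\ref{fo} \emph{intersected with} the Falconer construction — i.e. interleave the sets $E_{h_i}$ coming from Lemma~\ref{folemma} (for the polynomials $P_q$ and $R$) with the sets $B_i$ above in a single nested intersection $E = \overline{B(0,1)}\cap\bigcap_i \overline{B(E_{h_i},\cdot)}\cap\bigcap_i B_i$, choosing the parameters $h_i$ and $N_i$ alternately and fast enough that Lemma~\ref{dimlemma} (or the explicit mass-distribution estimate behind it) still yields $\dim_H E\ge n/2$. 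Since both types of building block are of the same "locally finite $\sim h$-separated net" shape, this interleaving is compatible with the dimension lower bound; this is the step I expect to require the most care, essentially re-running the proof of Lemma~\ref{dimlemma} for a hybrid sequence of scales.

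After that, two cleanup steps remain. First, Theorem~\ref{fo} (through the derivative trick) cannot prevent configurations where $P_q$ or $R$ vanishes together with all the relevant partial derivatives forced into the hierarchy — but for $P_q$ and $R$ the bad locus is tiny, and in any case, as in Theorem~\ref{nper8}, for each $q$ the set $E$ might contain at most finitely many points at distance $q$, and might contain finitely many "coincidence" quadruples; removing a countable set $T$ (finitely many points per rational $q$, finitely many per repeated-distance incident that survives) from $E$ fixes this while keeping the distance set null and not enlarging it. Second, to restore compactness I would argue as in Theorem~\ref{nper8}: by the refined dimension statement (Remark~\ref{dimremark}), $E$ — and hence $E\sm T$ — has positive Hausdorff measure in the gauge $\phi(r)=r^{n/2}(\log r^{-1})^{n+1}$, so by Frostman's lemma $E\sm T$ contains a compact subset of positive $\phi$-measure, which therefore has dimension $n/2$, does not contain three... rather, does not contain rational distances nor a repeated distance, and still has null distance set. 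For $n=1$ the claimed dimension is $1$: here I would instead start from a full-dimensional compact $F\subset[0,1]$ produced by Theorem~\ref{fo} applied to the linear polynomials $x+y-z-v$ and $x-y-q$ (which forbid the additive relations), and transport it by a smooth strictly convex diffeomorphism (as in Remark~\ref{fonemsharp}, e.g.\ $x\mapsto e^x$ composed with a scaling) so that distinct differences have distinct absolute values and rational distances disappear, intersecting again with a one-dimensional Falconer-type Cantor set to kill the distance set; the details for $n=1$ are routine once the $n\ge 2$ machinery is in place.
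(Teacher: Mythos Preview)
Your approach could probably be pushed through, but it is far more complicated than necessary, and the paper exploits a trick you missed. The key observation is that the null-distance-set property follows \emph{directly from Theorem~\ref{fo}}, with no Falconer-type interleaving at all. Instead of using only $R(x,y,z,v)=\abs{x-y}^2-\abs{z-v}^2$, the paper applies Theorem~\ref{fo} to the whole family
\[
P_r(a,b,c,d)=\abs{a-b}^2-\abs{c-d}^2-r,\qquad P^*_r(a,b,c)=\abs{a-b}^2-\abs{c-b}^2-r,\qquad Q_r(a,b)=\abs{a-b}^2-r
\]
for every $r\in\Q$ (all of degree $2$). The resulting compact $E$ of dimension $n/2$ then satisfies: $D^2(E)-D^2(E)$ contains no nonzero rational. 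By the Steinhaus theorem, any measurable set of positive Lebesgue measure has a difference set containing a neighbourhood of $0$; hence $D^2(E)$, and therefore $D(E)$, is Lebesgue null. This single step replaces your entire interleaving of the Falconer construction with the proof of Lemma~\ref{dimlemma}, as well as the Frostman cleanup. For $n=1$ the same argument works with the linear polynomials $(a-b)-(c-d)-r$, $(a-b)-(b-c)-r$, $(a-b)-r$; no diffeomorphism trick is needed.

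Two smaller issues with your write-up are also worth flagging. First, your polynomial $R$ applied to four \emph{distinct} points does not forbid isoceles configurations $\abs{x-y}=\abs{x-z}$ with a shared endpoint, so ``every distance is realised at most once'' fails without an additional three-variable polynomial (this is exactly the role of $P^*_0$ above). Second, your cleanup step rests on a misreading of Theorem~\ref{fo}: unlike Proposition~\ref{foprop}, Theorem~\ref{fo} already excludes \emph{all} zeros of $P_j$ on $m_j$-tuples of distinct points, not only those where the gradient is nonzero, so no exceptional configurations survive and no removal-plus-Frostman argument is required.
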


\begin{proof}
First let $n\ge 2$. Consider the polynomials (of degree 2)
$$P_r(a,b,c,d)=|a-b|^2-|c-d|^2-r \quad (r\in\Q), $$
$$P^*_r(a,b,c)=|a-b|^2-|c-b|^2-r \quad (r\in\Q), $$
$$Q_r(a,b)=|a-b|^2-r \quad (r\in\Q),$$
and apply Theorem~\ref{fo}. The obtained set $E$ does not contain rational distances  (except for zero) because of $Q_r$, and every (nonzero) distance is realised at most once because of $P_0$ and $P^*_0$.
Let $D^2(E)$ denote the squares of the distances realised by $E$. The polynomials $P_r$ and $P^*_r$ imply that $D^2(E)-D^2(E)$ does not contain rational numbers (except for $0$). As $D^2(E)$ is Lebesgue measurable, Steinhaus theorem implies that $D^2(E)$ and thus $D(E)$ are nullsets. 

For $n=1$, one can take the same polynomials without squaring the distances.
$$P_r(a,b,c,d)=(a-b)-(c-d)-r \quad (r\in\Q), $$
$$P^*_r(a,b,c)=(a-b)-(b-c)-r \quad (r\in\Q), $$
\begin{equation*}
Q_r(a,b)=(a-b)-r \quad (r\in\Q). \qedhere 
\end{equation*}
\end{proof}

In Theorem~\ref{dist1}, one may exclude any given countable set from $D(E)$ the same way as we did in Theorem~\ref{nper8}. 

\begin{remark}\label{neminjektiv}
The distance set conjecture implies that for every analytic set $E\subset\R^n$ with $\dim_H E > n/2$, there is a distance which is realised at least twice by $E$. In fact, there are four distinct points $x,y,x',y'\in E$ such that $|x-y|=|x'-y'|>0$. To see this, choose uncountable many disjoint Borel sets $E_i\subset E$ with $\dim_H E_i=(n/2+\dim_H E)/2$. If the distance set conjecture holds, $D(E_i)$ has positive Lebesgue measure. There are no uncountably many disjoint measurable sets of positive measure, thus the statement follows.
\end{remark}

Combining the polynomials in the proofs of Theorem~\ref{dist1} and Theorem~\ref{kollin} we obtain the result stated in the abstract.
\begin{theorem}\label{distkollin}
Let $n\ge 2$. There exists a compact set $E\subset\R^n$ of Hausdorff dimension $n/2$ such that the distance set is a nullset, $E$ does not contain rational distances nor collinear points, moreover, every distance and direction is realised at most once by $E$.
\end{theorem}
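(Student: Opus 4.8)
The plan is to invoke Theorem~\ref{fo} once more, this time on the \emph{union} of the two families of polynomials that already appeared in the proofs of Theorem~\ref{dist1} (in the case $n\ge 2$) and Theorem~\ref{kollin}. Explicitly, I would take for every $r\in\Q$ the degree-two polynomials
$$P_r(a,b,c,d)=|a-b|^2-|c-d|^2-r,\qquad P^*_r(a,b,c)=|a-b|^2-|c-b|^2-r,\qquad Q_r(a,b)=|a-b|^2-r,$$
together with the two degree-two polynomials
$$P(x,y,z,v)=(y_2-x_2)(v_1-z_1)-(y_1-x_1)(v_2-z_2),\qquad Q(x,y,z)=(y_2-x_2)(y_1-z_1)-(y_1-x_1)(y_2-z_2).$$
This is a countable family of polynomials with rational coefficients and maximum degree $2$, so Theorem~\ref{fo} produces a compact set $E\subset\R^n$ of Hausdorff dimension $n/2$ which contains no tuple of distinct points annihilating any one of these polynomials.

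Next I would read off the five required properties exactly as in the two source proofs. The polynomials $Q_r$ with $r\in\Q\setminus\{0\}$ forbid nonzero rational distances. The polynomials $P_0$ and $P^*_0$ together guarantee that no nonzero distance is realised twice (whether or not the two pairs of points share a point). The polynomial $Q$ rules out three collinear points, and $P$ ensures that no direction is realised twice: parallelism of $y-x$ and $v-z$, resp. collinearity of $x,y,z$, in $\R^n$ forces the same for the projections onto the first two coordinate axes, which is precisely the vanishing of $P$, resp. $Q$ (with the convention that $0$ is parallel to everything). Finally, for the measure of the distance set: writing $D^2(E)$ for the set of squared distances realised by $E$, the polynomials $P_r$ and $P^*_r$ show that $D^2(E)-D^2(E)$ contains no nonzero rational; since $D^2(E)$ is analytic, hence Lebesgue measurable, Steinhaus' theorem forces $D^2(E)$, and therefore $D(E)$, to be a nullset.

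I do not expect a genuine obstacle here: the entire point of Theorem~\ref{fo} is that it accommodates arbitrary countable families of polynomials, so combining all the constraints at once costs nothing in dimension. The only points needing a sentence of care are (i) verifying that parallelism and collinearity in $\R^n$ with $n\ge 3$ are indeed captured by the vanishing of $P$ and $Q$ after projecting to the coordinate plane, and (ii) supplying the measurability of $D^2(E)$ needed for Steinhaus' theorem (it is a continuous image of a $\sigma$-compact set, hence analytic). Note also that we use the form of the main theorem without the nonvanishing-derivative hypothesis, so no issue arises at configurations where the polynomials and their derivatives vanish simultaneously. Neither point is serious, and the sharpness remark (Remark~\ref{neminjektiv}) shows that the dimension $n/2$ cannot be improved if the distance set conjecture holds.
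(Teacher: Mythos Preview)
Your proposal is correct and is exactly the approach the paper takes: the paper's own proof of Theorem~\ref{distkollin} consists of the single sentence ``Combining the polynomials in the proofs of Theorem~\ref{dist1} and Theorem~\ref{kollin} we obtain the result stated in the abstract,'' and you have spelled out precisely that combination and the ensuing verifications.
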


%ezt kesobb meg beletehetjuk
%\begin{cor}\label{nullsetP}
%Let $n\ge 1$, and let $P:\R^{nm}\to\R$ be a polynomial with rational coefficients of degree $d$. There is a compact set $E\subset\R^n$ of Hausdorff dimension $n/d$ such that
%$$P(E^m)=\{P(x_1, \ldots, x_m) \, : \, x_1,\ldots,x_m\in E\}\subset \R$$
%is Lebesgue null, and $P$ is injective on $S\cap E^m$ where $S=\{(x_1,\ldots, x_m)\in\R^m\,:\, x_i\neq x_j \text{ if } i\neq j\}$.
%\end{cor}
%\begin{proof}
%We can follow the proof of Theorem~\ref{dist1}. We would apply Theorem~\ref{fo} for the $2mn$-term polynomials $P_r=P(x_1,\ldots, x_m)-P(y_1, \ldots, y_m)-r$ ($r\in\Q$). However, this is not enough, we have to consider all polynomials obtained from $P_r$ by substituting a variable into another (as $P(x_m, x_2, \ldots , x_m)-P(x_m, y_2, \ldots, y_m)-r$). Apply Theorem~\ref{fo} for the non identically zero polynomials obtained this way. 
%Then for all but finitely many rational numbers $r$, $r\notin P(E^m)-P(E^m)$, and Steinhaus theorem implies the statement.
%\end{proof}

%%%%%%%%          %%%%%%%%%%            %%%%%%%

\section{Sets not containing similar copies of given patterns}\label{fobbsection}
It is straightforward to generalize Theorem~\ref{fo} the following way.
 
\begin{theorem}\label{fobb}
Let $n\ge 1$. Let $J$ be a countable set. For each $j\in J$, let $m_j$ be a positive integer, let $P_j:\R^{nm_j}\to\R$ be a (non identically zero)  polynomial in $nm_j$ variables with rational coefficients, and let $\Phi_{j,1},\ldots, \Phi_{j,m_j}$ be $C^1$ smooth diffeomorphisms of $\R^n$. Assume that $d$ is the maximum degree of the polynomials $P_j$ ($j\in J$).
Then there exists a compact set $E\subset \R^n$ of Hausdorff dimension $n/d$ such that for every $j\in J$, $E$ does not contain $m_j$ distinct points $x_1,\ldots, x_{m_j}$ satisfying $$P_j(\Phi_{j,1}(x_1), \ldots, \Phi_{j,m_j}(x_{m_j}))=0.$$
\end{theorem}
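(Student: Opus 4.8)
The plan is to reduce Theorem~\ref{fobb} to Theorem~\ref{fo} by absorbing the diffeomorphisms into the local analysis, i.e. by re-running the proofs of Lemma~\ref{folemma} and Proposition~\ref{foprop} with $P_j$ replaced by the composition $R_j(x_1,\dots,x_{m_j}) = P_j(\Phi_{j,1}(x_1),\dots,\Phi_{j,m_j}(x_{m_j}))$. The subtlety is that $R_j$ is in general no longer a polynomial, so the rational-lattice argument in Lemma~\ref{folemma} (where we crucially used that $P(y_1,\dots,y_m) = j/N^d$ for lattice points $y_i\in N^{-1}\Z^n$) does not apply directly. The fix is to keep the lattice $L=N^{-1}\Z^d$ in the \emph{target} coordinates and pull it back: near a solution point, work in the variables $w_i = \Phi_{j,i}(x_i)$ rather than $x_i$. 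Since each $\Phi_{j,i}$ is a $C^1$ diffeomorphism, the preimage $\Phi_{j,i}^{-1}\big(B(w_i^0,r)\cap L\big)$ of a piece of the lattice is, up to bi-Lipschitz distortion on the small ball $B(x_i^0, r')$, a set whose $h$-neighbourhood still covers a neighbourhood of $x_i^0$ and whose points are separated by a comparable amount; the constants degrade only by factors bounded by $\sup\|D\Phi\|$ and $\sup\|D\Phi^{-1}\|$ on the relevant compact ball. So one sets $E_h$ so that in the $w$-coordinates it locally looks like the shifted lattice $A_i$ of Lemma~\ref{folemma}, and the estimate \eqref{racs3} goes through for $R_j$ with $r$ and $c$ replaced by smaller constants depending also on $\Phi_{j,i}$.

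Concretely, here are the steps I would carry out. First, state and prove the analogue of Lemma~\ref{folemma}: given $P$ polynomial of degree $d$ with rational coefficients, diffeomorphisms $\Phi_1,\dots,\Phi_m$, and distinct points $x_1,\dots,x_m$ with $R(x_1,\dots,x_m)=0$ and $R'(x_1,\dots,x_m)\ne 0$, there is $r>0$ so that for small $h$ there is $E_h$ with $B(E_h,h)=\R^n$ and $R(y_1,\dots,y_m)\ne 0$ on $B(x_i,r)\cap B(E_h, h^d/\log h^{-1})$. The proof: pick a partial derivative $\partial_{i_0} R$ not vanishing at the base point; since $R = P\circ(\Phi_1\times\cdots\times\Phi_m)$ and the $\Phi_i$ are diffeomorphisms, this forces some partial derivative of $P$ in the $w$-variables to be non-zero at $w_i^0=\Phi_i(x_i^0)$, so run the Lemma~\ref{folemma} argument in $w$-space to produce shifted-lattice sets $\widetilde A_i\subset B(w_i^0, r_0)$; then set $A_i = \Phi_i^{-1}(\widetilde A_i)$ and $E_h = \bigcup A_i \cup (\R^n\setminus\bigcup B(x_i,r))$. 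The bi-Lipschitz bounds on $\Phi_i$ over $\overline{B(x_i^0,r)}$ convert $B(E_h,h)=\R^n$ and the separation/covering statements between the two coordinate systems, possibly after shrinking $r$ and the constant $c$. Second, repeat Proposition~\ref{foprop} verbatim with $R_j$ in place of $P_j$, using the Lindel\"of property on the solution sets $G_j=\{x: R_j(x)=0,\ R_j'(x)\ne 0,\ x_i\ne x_j\}$ and Lemma~\ref{dimlemma} for the dimension lower bound (the dimension lemma is about the sets $E_h$ and is unaffected). Third, remove the derivative hypothesis exactly as in the proof of Theorem~\ref{fo}: if $R_j(x_1,\dots,x_{m_j})=0$ then, since $R_j$ is real-analytic and not identically zero (as $P_j\not\equiv 0$ and the $\Phi_i$ are diffeomorphisms), some iterated partial derivative of $R_j$ is non-zero there while an earlier one vanishes — but here care is needed because iterated partials of $R_j$ are not polynomials, so instead apply the chain rule to reduce to: some iterated partial of $P_j$ in the $w$-variables vanishes at $(\Phi_{j,i}(x_i))$ while the next does not, and feed the corresponding pulled-back functions $w\mapsto (\partial^{(k)}P_j)(\Phi(\cdot))$ into the generalized proposition.

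The main obstacle is controlling the \emph{chain of derivatives} in the last step. For a genuine polynomial $P$ one descends through $\partial_{i_1}P, \partial_{i_2}\partial_{i_1}P,\dots$ until reaching a non-zero constant, and the whole finite list consists of polynomials of strictly decreasing degree, so Proposition~\ref{foprop} applies to each. After composing with diffeomorphisms this exact mechanism breaks: $\partial_{i_1}(P\circ\Phi)$ involves $D\Phi$ and is neither polynomial nor of controlled ``degree''. The clean way around this — and the way I expect to present it — is to never differentiate $R_j$ at all: the finitely many functions one hands to the generalized Proposition~\ref{foprop} are precisely $\{(\partial_{i_k}\cdots\partial_{i_1}P_j)\circ(\Phi_{j,1}\times\cdots\times\Phi_{j,m_j}) : 0\le k < r\}$, each of the form (polynomial of degree $d_k\le d$)$\circ$(diffeomorphism), and the observation \eqref{obs} applied to $P_j$ at the point $(\Phi_{j,i}(x_i))_i$ guarantees one of them vanishes there with the next non-vanishing. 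Since the generalized Proposition~\ref{foprop} already tolerates arbitrary $C^1$ diffeomorphisms and arbitrary rational-coefficient polynomials up to degree $d$, this closes the argument, and the resulting $E$ has Hausdorff dimension $n/d$ as claimed.
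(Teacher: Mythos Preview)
The paper does not actually give a proof of Theorem~\ref{fobb}; it merely asserts that ``it is straightforward to generalize Theorem~\ref{fo} the following way'' and proceeds directly to applications. Your proposal is precisely the natural way to make that assertion rigorous, and it is correct: pull the lattice in Lemma~\ref{folemma} back through the diffeomorphisms using local bi-Lipschitz control, rerun Proposition~\ref{foprop} for the compositions $R_j=P_j\circ(\Phi_{j,1}\times\cdots\times\Phi_{j,m_j})$, and handle the derivative reduction by differentiating $P_j$ in the $w$-variables rather than differentiating $R_j$.

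The one point worth emphasizing is your last step, which is exactly right and is where a careless write-up would go wrong. Since $D(\Phi_{j,1}\times\cdots\times\Phi_{j,m_j})$ is invertible, $R_j'(x)\ne 0$ if and only if $P_j'(w)\ne 0$ at $w_i=\Phi_{j,i}(x_i)$; hence the chain of polynomials $\partial_{i_k}\cdots\partial_{i_1}P_j$ (each with rational coefficients and degree $\le d$), composed with the fixed diffeomorphisms, are legitimate inputs to your generalized Proposition~\ref{foprop}, and observation~\eqref{obs} applied at the point $(\Phi_{j,i}(x_i))_i$ selects the correct one. This avoids the trap of differentiating $R_j$ itself, which would drag in derivatives of the $\Phi_{j,i}$ and destroy the polynomial structure needed for the lattice argument. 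So your plan matches the paper's (unstated) intended argument, with the added merit that you have isolated and resolved the only genuine subtlety.
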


For example, one can choose the diffeomorphisms to be multiplication by invertible matrices. Note that the entries of the matrices can be rational or irrational, but the coefficients of the polynomials are still required to be rational numbers.

Using Theorem~\ref{fobb}, we immediately get an alternative proof to theorems of Falconer, Keleti and Maga about full dimensional sets in $\R$ and $\R^2$ not containing similar copies of given triangles.

\begin{theorem}[Falconer \cite{FalcTriangle}, Keleti \cite{Keleti}, Maga \cite{Maga}]\label{FKM}
Let $n=1$ or $2$. Let $J$ be a countable set. For each $j\in J$, let $T_j$ be a three-point subset of $\R^n$.  There is a compact set $E\subset\R^n$ of Hausdorff dimension $n$ which does not contain a subset which is similar to any of the sets $T_j$.
\end{theorem}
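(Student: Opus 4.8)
The plan is to feed Theorem~\ref{fobb} only \emph{linear} equations (so $d=1$), whence the resulting set automatically has full dimension $n/d=n$; all the arithmetic of the (possibly irrational) shapes $T_j$ gets pushed into the diffeomorphisms. Identify $\R^2$ with $\C$ (the case $n=1$ is the same, with $\R$ in place of $\C$). For each $j\in J$ fix an enumeration $T_j=\{t_{j,1},t_{j,2},t_{j,3}\}$ and set $\mu_j=(t_{j,2}-t_{j,1})/(t_{j,3}-t_{j,1})$. As the three points are distinct, $\mu_j\notin\{0,1\}$, so multiplication by each of $\mu_j-1,\ -\mu_j,\ \bar\mu_j-1,\ -\bar\mu_j$ is an $\R$-linear ($C^\infty$) diffeomorphism of $\R^2$.

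First I would record the elementary fact that if $\{y_1,y_2,y_3\}\subset\R^2$ is similar to $T_j$, then after relabelling the $y_i$ so that the underlying similarity sends $t_{j,k}$ to $y_k$, one has $(y_2-y_1)=\mu_j(y_3-y_1)$ for a direct similarity and $(y_2-y_1)=\bar\mu_j(y_3-y_1)$ for an indirect one; equivalently, $(\mu_j-1)y_1+y_2-\mu_j y_3=0$ or $(\bar\mu_j-1)y_1+y_2-\bar\mu_j y_3=0$ in $\C$. Taking real parts, in either case
$$(\Phi_1(y_1))_1+(y_2)_1+(\Phi_3(y_3))_1=0,$$
where $\Phi_1,\Phi_3$ denote multiplication by $\mu_j-1$ and $-\mu_j$ in the direct case and by $\bar\mu_j-1$ and $-\bar\mu_j$ in the indirect case, and $(\,\cdot\,)_1$ is the first coordinate in $\R^2$. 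Thus, with the single polynomial $P(u,v,w)=u_1+v_1+w_1$ --- of degree $1$ and with integer coefficients --- every set similar to $T_j$, suitably ordered, is a zero of $(y_1,y_2,y_3)\mapsto P(\Phi_1(y_1),y_2,\Phi_3(y_3))$ for one of these two choices of $(\Phi_1,\Phi_3)$.

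Then I would apply Theorem~\ref{fobb} to the countable family indexed by $J\times\{\mathrm{direct},\mathrm{indirect}\}$, all members of which use this one polynomial $P$ (so $m_j=3$, $n\le 2$, maximum degree $d=1$) with the diffeomorphism triples $(\Phi_1,\mathrm{id},\Phi_3)$ just described. This yields a compact $E\subset\R^n$ with $\dim_H E=n/d=n$ that contains no three distinct points $x_1,x_2,x_3$ --- in \emph{any} order --- forming a zero of any of these polynomials. Since, by the previous step, \emph{some} ordering of any subset of $\R^n$ similar to some $T_j$ is such a zero, $E$ contains no set similar to any $T_j$. For $n=1$ the argument is identical, using $P(u,v,w)=u+v+w$ on $\R^3$ and the real scalars $\mu_j-1,\ -\mu_j$; there is no ``indirect'' case, because the ratio $\mu_j$ is preserved by every real similarity $x\mapsto ax+b$, $a\neq0$. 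No removal of exceptional points (as in the proof of Theorem~\ref{nper8}) is needed here, since these polynomials cut out the similar copies directly.

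The one thing to get right --- and the reason the method stops at $n=2$ --- is the reduction to a \emph{single} equation of degree $1$. The condition ``$\{y_1,y_2,y_3\}$ is similar to $T_j$'' has codimension $2$, but we only need $E$ to miss a set \emph{containing} all similar copies, so it suffices to impose the real part of the complex linear relation and discard the imaginary part; and once the irrational factor $\mu_j$ has been absorbed into the (arbitrary) diffeomorphisms, this real part is a \emph{rational} polynomial of degree $1$, which is exactly what forces $\dim_H E=n/d=n$. Retaining both parts, or working in $\R^n$ with $n\ge 3$ --- where direct similarities are not $\C$-linear and the length-ratio constraint is genuinely quadratic --- would leave degree $2$ and hence only dimension $n/2$.
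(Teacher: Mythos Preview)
Your argument is correct and follows essentially the same route as the paper: absorb the shape parameter into linear diffeomorphisms so that only a degree~$1$ rational polynomial remains, then invoke Theorem~\ref{fobb}. The paper phrases the similarity condition via an invertible matrix $A_j$ with $c-a=A_j(b-a)$ and uses the diffeomorphisms $A_j-I,\ -A_j,\ I$; your complex-number formulation with $\mu_j$ and $\bar\mu_j$ is the same thing written in coordinates, and you are more explicit than the paper about the direct/indirect distinction and about why the ``some ordering'' issue is automatically handled by the quantifier in Theorem~\ref{fobb}.
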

\begin{proof}
For each $j\in J$, there is an $n\times n$ invertible matrix $A_j$ (a similarity) such that $\{a,b,c\}\subset \R^n$ is similar to $T_j$ if and only if $c-a=A_j(b-a)$, that is,
$$(A_j-I)a-A_j b +c=0.$$
We choose the diffeomorphisms $\Phi_{j,1}, \Phi_{j,2}, \Phi_{j,3}$ to be multiplication by $A_j-I$, $-A_j$, and $I$.
If $n=1$, we put $P_j(x,y,z)=x+y+z$; if $n=2$, we put $P_j(x_1, x_2, y_1, y_2, z_1, z_2)=x_1+y_1+z_1$. The compact $E$ set obtained by Theorem~\ref{fobb} satisfies all requirements.
\end{proof}

\begin{remark}
It is not known what dimension a compact set in $\R^3$ can have if it does not contain three vertices of an equilateral triangle, or four vertices of a regular tetrahedron. There is no non-trivial linear equation satisfied by all such three of four points, therefore the proof we gave for Theorem~\ref{FKM} does not work if $n=3$ (or if  $n\ge 3$). On the other hand, from Theorem~\ref{fo} it follows that there is a compact set $E\subset\R^n$ of dimension $n/2$ which does not contain three vertices of an equilateral triangle: one can use, for example, the polynomial $\sk{y-x, \,z-\frac{x+y}{2}}$.
\end{remark}

\section{Finding angles in sets of large dimension}\label{7}

\begin{theorem}\label{approxangles}
Let $0<\eps\le 1$. Let $E\subset \R^n$ be a Borel set such that
\beq\label{konc1}
\dim_H E > \left\lfloor \left(1-\frac{\eps^2}{8 \log (8/\eps)}\right)\cdot n \right\rfloor+1.
\eeq
Then for every $\alpha\in[0,\pi]$, $E$ contains three points $x,y,z$ such that $\angle xyz \in (\alpha-\pi \eps, \alpha+\pi \eps).$
\end{theorem}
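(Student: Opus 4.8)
The plan is to reduce, via radial projection, to a statement about \emph{directions} on the unit sphere $S^{n-1}$ (the point being that radial projection preserves the relevant angle exactly, while an orthogonal projection would not), and then to use concentration of measure on the sphere to locate, inside a suitable low-dimensional great subsphere, a set of directions that is $\eps$-dense. Write $c:=\eps^2/(8\log(8/\eps))$, so the hypothesis reads $\dim_H E>\lfloor(1-c)n\rfloor+1$; let $\sigma$ denote normalised surface measure on a sphere, and for $y\in\R^n$ set $\pi_y(x):=(x-y)/|x-y|$.

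Fix $\alpha\in[0,\pi]$. It suffices to find $y\in E$ and $x,z\in E\setminus\{y\}$ with $\angle(\pi_y(x),\pi_y(z))\in(\alpha-\pi\eps,\alpha+\pi\eps)$: then $x,y,z$ are distinct and $\angle xyz$ equals $\angle(\pi_y(x),\pi_y(z))$ exactly, since the angle at $y$ depends only on the two directions. So I would look for a point $y\in E$ whose pinned direction set $D_y:=\pi_y(E\setminus\{y\})\subseteq S^{n-1}$ is large on the whole sphere; in fact it is enough that the $\eps$-neighbourhood $B(D_y,\eps)$ has $\sigma$-measure $\ge 1/2$. This is where a theorem of Mattila on orthogonal projections (respectively intersections with planes) enters: from $\dim_H E>\lfloor(1-c)n\rfloor+1$ one should be able to extract such a $y$. (Concentration of measure already helps here: for instance, if $D_y$ contains a great subsphere of dimension $\gtrsim cn$, its $\eps$-neighbourhood fills most of $S^{n-1}$; in practice one probably gets a weaker quantitative statement and carries it along, and the ``$+1$'' in the threshold, together with the precise shape of Mattila's estimate, is used precisely at this point.)

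Granting this, the function $f(u):=\dist(u,D_y)$ is $1$-Lipschitz on $S^{n-1}$ and, since $\sigma(B(D_y,\eps))\ge 1/2$, its median is $<\eps$. By L\'evy's concentration inequality $\sigma(\{f>\mathrm{med}+t\})\le Ce^{-cnt^2}$; taking an $\eps$-net (of size $\le(C/\eps)^m$) in the unit sphere of a random $m$-dimensional subspace $W$ and applying a union bound, one finds that as soon as $m\le\frac{\eps^2}{8\log(8/\eps)}n=cn$ there is a $W$ with $f<O(\eps)$ on \emph{all} of $S^{n-1}\cap W$ --- the constant $8$ and the logarithm are exactly what makes $(C/\eps)^m e^{-cn\eps^2}<1$. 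Fixing such a $W$ with $m\ge 2$, the set $D_y$ is $O(\eps)$-dense in the great subsphere $S^{n-1}\cap W$, and hence realises every angular distance up to error $O(\eps)$: given $\alpha$, pick $u_0,v_0\in S^{n-1}\cap W$ with $\angle(u_0,v_0)=\alpha$, pick $u,v\in D_y$ within $\eps$ of them, and note $|\angle(u,v)-\alpha|<\pi\eps$ (after absorbing an absolute constant into $\eps$ at the outset). The corresponding $x,z\in E$ finish the proof.

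The main difficulty I expect is the Mattila step: one needs a pinned direction set of a point of $E$ that is measure-theoretically large on the sphere, not merely of large Hausdorff dimension --- a high-dimensional subset of $S^{n-1}$ may sit inside a small cap, and then it carries no angle exceeding that cap's diameter. Squeezing the required largeness out of Mattila's projection/section theorems, and reconciling the dimension loss there against the Dvoretzky-type threshold $m\asymp\eps^2 n/\log(1/\eps)$, is the delicate bookkeeping; the concentration estimate and the final ``$\eps$-dense $\Rightarrow$ all angular distances'' step are routine.
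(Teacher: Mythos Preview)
Your ingredients are exactly those of the paper --- radial projection to the sphere, Mattila's theorem on plane sections, and the Dvoretzky--Milman concentration lemma --- but the way they are combined is inverted, and the step you flag as the ``main difficulty'' is indeed a genuine gap.

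You want Mattila's theorem to give, directly, a point $y\in E$ whose direction set $D_y$ has $\sigma(B(D_y,\eps))\ge 1/2$, and then to apply concentration to find a great subsphere on which $D_y$ is $\eps$-dense. But Mattila only yields a point $y$ such that almost every plane through $y$ of dimension $\lfloor n-s+1\rfloor$ meets $D_y$; this does not by itself force the $\eps$-neighbourhood to be large, and your parenthetical suggestion (that $D_y$ might contain a great subsphere of dimension $\gtrsim cn$) is not what Mattila provides. The paper's key move is to run concentration in the \emph{opposite} direction: if $\sigma(\overline{B(D_y,\eps)})\le 1/3$ then the median of $f(u)=\dist(u,D_y)$ exceeds $\eps$, and the concentration lemma furnishes a plane $L$ with $\dim L\ge cn-1$ on which $f\ge\mathrm{med}(f)-\eps>0$, so that $S\cap L$ (and all nearby planes of the same dimension) \emph{miss} $D_y$. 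This contradicts Mattila unless $\dim L\le\lfloor n-s+1\rfloor-1$, forcing $s\le\lfloor(1-c)n\rfloor+1$ against the hypothesis. Thus concentration is used to rule out the small-neighbourhood case, not to locate a dense subsphere. Once $\sigma(\overline{B(D_y,\eps)})>1/3$ is secured, the paper finishes with a one-line averaging argument (a random rotation of an equilateral triangle of side $t$ places two vertices in $\overline{B(D_y,\eps)}$, so every chord length occurs there), which is simpler than your second application of concentration --- though your steps 3--5 would also work once the gap at step~1 is filled this way.
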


The proof combines a theorem of Mattila about dimension of plane sections, and a result about concentration of Lipschitz functions defined on the sphere.

\def\med{\text{med}}
\begin{proof}
Let $E\subset \R^n$ a Borel set of dimension $s$, and let $m<s$ be an integer.
By a theorem of Mattila \cite[Theorem 10.11.]{Mattila}, there exists a point $x\in E$ such that almost every $n-m$ dimensional plane through $x$ intersects $E$ in a set of dimension $s-m$. Let $S$ be the unit sphere with center $x$, and project $E\setminus \{x\}$ to $S$; let $K\subset S$ be the set obtained.
Then almost every $\lfloor n-s+1 \rfloor$ dimensional plane through $x$ intersects $K$.

If $K$ contains two points of distance $2\sin (\alpha/2)$, then $E$ contains three points forming the angle $\alpha$.

\def\cK{\overline{K}}
Let $\sigma$ denote the spherical probability measure on $S$.
Let $\cK_\eps=S\cap \overline{B(K,\eps)}$ be the closure of the $\eps$ neighbourhood of $K$ in $S$.

First let us suppose that $\sigma(\cK_\eps)>1/3$. Let $0<t<2$.
By \eqref{konc1} we may assume that $n\ge 3$ and thus we can choose three points in $S$ forming an equilateral triangle of side $t$. Considering a random rotated image of these three points, with positive probability, at least two of them are in $\cK_\eps$. Therefore $\cK_\eps$ contains every distance $0\le t<2$. Using $\eps\le 1$ and the remark above, this implies that $E$ contains an angle in every interval $(\alpha - \pi \eps, \alpha+\pi\eps)$.

Now let us suppose  that $\sigma(\cK_\eps)\le 1/3$.
Define $f:S\to \R$ by $f(y)=\dist(y, K)$. Then $f$ is Lipschitz with Lipschitz constant $1$. The median of $f$, defined as $$\med(f)=\sup \{t: \sigma(\{y : f(y)\le t\})\le 1/2\},$$ is more than $\eps$, as $\sigma(\{y : f(y)\le \eps\})=\sigma(\cK_\eps) \le 1/3$.
Therefore, by \cite[14.3.4 Proposition]{Matousek}, there is a plane $L$ through $x$, such that all values of $f$ restricted to $S\cap L$ are in $[\med(f)-\eps, \med(f)+\eps]$ and
$$\dim L \ge \frac{\eps^2}{8 \log (8/\eps)}\cdot n -1.$$
Specially, $S\cap L$ is disjoint from the closure of $K$.
The neighbouring planes have the same property, and thus, by the above theorem of Mattila, we must have
$$ \frac{\eps^2}{8 \log (8/\eps)}\cdot n -1 \le \dim L \le \lfloor n-s+1 \rfloor -1,$$
so
$$s\le \left\lfloor \left(1-\frac{\eps^2}{8 \log (8/\eps)}\right)\cdot n \right\rfloor+1.$$
This cannot happen if \eqref{konc1} holds.
\end{proof}

%%%%%%%%%%%%%%%%%%%%%%%%%%%%%%%%%%%%%%%%%%%
\section{Lower bound for the dimension}\label{8}

Here we prove the statement which we needed in Section~\ref{3}. Similar arguments can be found in \cite[Theorem 8.15]{Falconer}. Note that the exact form of the conditions \eqref{tul1} and \eqref{tul2} is not important for the application in Section~\ref{3}.

\begin{lemma}\label{dimlemma}
Let $n\ge 1$ and $d\ge 1$ be integers. Let $h_i>0$ be a rapidly decreasing sequence such that
\beq\label{tul1}
h_1\le 1/10, \quad h_{j}\le h_{j-1}^{d+1}/6 \quad (j=2, 3, \ldots),
\eeq
and
\beq\label{tul2}
h_j\le \exp\left(-\prod_{i=1}^{j-1} h_i^{-(dn+1)}\right) \quad (j=2, 3, \ldots).
\eeq
 Suppose that $E_i\subset \R^n$ satisfies $B(E_i, h_i)=\R^n$. Let
$$E=\bigcap_{i=1}^\infty \overline{B(E_i, h_i^d/(2\log h_i^{-1}))}.$$
Then the Hausdorff dimension of $E$ is at least $n/d$.
\end{lemma}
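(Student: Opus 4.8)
The plan is to show that the set $E$ supports a probability measure $\mu$ with the Frostman property $\mu(B(x,\rho)) \le C \rho^{n/d - \eta}$ for every $\eta>0$, from which $\dim_H E \ge n/d$ follows by the mass distribution principle. Since the construction of $E$ is a nested intersection, I would build $\mu$ as a weak limit of uniform probability measures on suitable finite approximations, following the template of \cite[Theorem 8.15]{Falconer}. Concretely, at stage $i$ one replaces $E_i$ by a finite grid-like set: because $B(E_i,h_i)=\R^n$, the thickened set $\overline{B(E_i, h_i^d/(2\log h_i^{-1}))}$ inside $B(0,1)$ can be covered by at most of order $h_i^{-n}$ balls of radius $\rho_i := h_i^d/(2\log h_i^{-1})$, and it \emph{contains} at least a comparable number $\gtrsim h_i^{-n}$ of $\rho_i$-separated points (here is where $B(E_i,h_i)=\R^n$ is essential — it guarantees $E_i$ is ``$h_i$-dense'', so its $\rho_i$-thickening still occupies a definite proportion of space). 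One then defines a sequence of compact sets $E^{(1)} \supset E^{(2)} \supset \cdots$ where $E^{(i)}$ is a union of $\rho_i$-balls, each ball of the previous stage containing many balls of the current stage, and puts $\mu$ as the natural Cantor-type measure; $E \supseteq \bigcap_i E^{(i)}$ up to the harmless issue that $E$ is slightly larger, but since we only want a lower bound for $\dim_H E$ it suffices that $\mu$ is supported on a subset of $E$.

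The heart of the matter is the Frostman estimate. Fix $x\in\R^n$ and $0<\rho<1$, and let $k$ be the index with $\rho_{k+1} \le \rho < \rho_k$ (roughly). The ball $B(x,\rho)$ meets only boundedly many of the stage-$k$ pieces, and within each such piece $\mu$ is distributed over the stage-$(k+1), (k+2),\dots$ refinements, so one gets a bound of the shape
\[
\mu(B(x,\rho)) \le C \cdot \frac{\#\{\text{stage-}j\text{ balls in one stage-}(j-1)\text{ ball}\}^{-1}\text{-type products}}{\cdots},
\]
which, after unwinding, is governed by the ratio of $\rho$ to $\rho_k$ raised to power $n$ at the ``current'' scale and by the total multiplicative weight $\prod_{i\le k}(\text{number of children})^{-1} \approx \prod_{i\le k} h_i^{n}$ at coarser scales. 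Because each refinement goes from scale $\rho_{i-1}$ down to scale $\rho_i = h_i^d/(2\log h_i^{-1})$, and $h_i \ll h_{i-1}^{d+1}$ by \eqref{tul1}, the ``number of children'' of a stage-$(i-1)$ ball is about $(\rho_{i-1}/\rho_i)^{?}$ but weighted by $h_i^{-n}$ actual points; comparing $h_i^{-n}$ against $\rho_i^{-n/d} = (h_i^d/(2\log h_i^{-1}))^{-n/d} \approx h_i^{-n}(\log h_i^{-1})^{n/d}$ shows we lose only logarithmic factors, i.e.\ we get $\mu(\text{stage-}i\text{ ball}) \approx \rho_i^{n/d}(\log h_i^{-1})^{O(1)}$, which is $\le \rho_i^{n/d-\eta}$ once $i$ is large. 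Condition \eqref{tul2} is exactly what is needed to absorb the accumulated logarithmic and combinatorial errors: it forces $h_j$ so small relative to $\prod_{i<j} h_i^{-(dn+1)}$ that the product of all error terms from scales coarser than the current one is dominated by a single power $\rho^{-\eta}$. I would verify the estimate separately in the two regimes $\rho_{k+1}\le \rho < \rho_k^d/(2\log\cdot)$ (``between stages'', where the ball sees the spread of stage-$(k+1)$ points inside a stage-$k$ ball) and the regime where $\rho$ is comparable to a $\rho_k$.

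The main obstacle I anticipate is bookkeeping the logarithmic factors $2\log h_i^{-1}$ and the transition scales cleanly enough that \eqref{tul1}–\eqref{tul2} visibly suffice; the geometric idea (a self-similar-like Cantor measure on nested $h_i$-dense thickened sets) is standard, but making the constants line up so that the exponent is exactly $n/d$ rather than something smaller requires care, in particular ensuring the ``number of children'' lower bound $\gtrsim h_i^{-n}$ genuinely holds for the thickened sets $\overline{B(E_i,\rho_i)}$ and not just for $E_i$ itself. A secondary point is that $E$ as defined also intersects $B(0,1)$ and is closed, so one should check $\mu(E)=1$, i.e.\ the weak limit does not escape to the boundary — this is immediate since all approximants lie in $\overline{B(0,1)}$. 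Once the measure and its Frostman bound are in hand, $\dim_H E \ge n/d$ is the mass distribution principle, and the proof is complete. (For the strengthening needed in the proof of Theorem~\ref{nper8}, one notes the same computation gives positive Hausdorff measure with respect to the gauge $r^{n/d}(\log r^{-1})^{n+1}$, cf.\ Remark~\ref{dimremark}.)
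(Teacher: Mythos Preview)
Your proposal is correct and follows essentially the same route as the paper's proof: pass to an $h_i$-separated subset of each $E_i$, build a nested Cantor-type family of $b_i$-balls with $\approx b_{i-1}^n h_i^{-n}$ children per parent, take a weak limit of the normalised Lebesgue measures on the finite stages, and verify the Frostman bound by splitting into the two regimes $h_j^d\le r\le h_j$ and $h_{j+1}\le r\le h_j^d$, with \eqref{tul1} guaranteeing the nesting and \eqref{tul2} absorbing the accumulated $(\log h_i^{-1})$ and constant factors. The only cosmetic difference is that the paper goes directly for the gauge bound $\mu(B(x,r))\le C'' r^{n/d}(\log r^{-1})^{n+1}$ rather than first proving $\mu(B(x,r))\le C_\eta r^{n/d-\eta}$, which is exactly the strengthening you note at the end.
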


\begin{remark}\label{dimremark}
In fact, we will prove that for any ball $B(x,1)$ of unit radius, $E\cap B(x,1)$ has positive Hausdorff measure with respect to the gauge function
$r\mapsto r^{n/d} (\log r^{-1})^{n+1}$.
%$r\mapsto r^{n/d} \log^{n+1}(r^{-1})$.
%
By modifying the assumption on the sequence $(h_i)$ in Lemma~\ref{dimlemma}, one can prove the same statement for any other gauge function $\phi$ satisfying
$$\lim_{r\to0}\phi(r)/r^{n/d}=\infty.$$
\end{remark}

The proof of Lemma~\ref{dimlemma} is straightforward but includes tedious calculations. 
%Similar estimates can be find in

\begin{proof}
\newcommand{\miez}[1]{\lceil #1 \rceil} 

Let $E'_i$ be a maximal subset of $E_i$ in which every two points have distance at least $h$. Then $B(E'_i, 2h)=\R^n$.

Let $b_i=h_i^d/(2\log h_i^{-1})$. Then $h_i^{d+1}/2\le b_i \le h_i^d /4$.

There exists a constant $C>0$, which depends only on $n$, such that for every $x\in\R^n$, $r>0$,
\beq
\#(B(x,r)\cap E'_i) \le C r^n h_i^{-n} \quad \text{if} \quad h_i/4\le r.
\eeq
Therefore
\beq\label{eq-metszi}
\#\{y \in E'_i \,:\, B(x,r) \cap B(y,b_i) \neq\emptyset \} \le C2^n r^n h_i^{-n} \quad \text{if} \quad h_i/4\le r.
\eeq
Similarly, there exists a constant $c>0$ which depends only on $n$ such that for every $x\in \R^n$, $r>0$,
\beq\label{eq-tart}
\#\{y \in E'_i \,:\, B(y,b_i) \subset B(x,r) \} \ge \miez{c r^n h_i^{-n}} \quad \text{if} \quad 3h_i\le r.
\eeq

Fix an arbitrary $x_0\in \R^n$, and let $E'_0=\{x_0\}$. Let $b_0=1$.

By \eqref{eq-tart}, for every $i\ge 0$ and $x\in E'_i$, there are at least $\miez{ cb_i^n h_{i+1}^{-n} }$ points $y\in E'_{i+1}$ such that $B(y, b_{i+1}) \subset B(x, b_i)$. By recursion we define sets $E''_i\subset E'_i$.
Let $E''_0=E'_0$. For $i\ge 0$, let $E''_{i+1}$ be a minimal subset of $E'_{i+1}$ such that for every $x\in E''_i$, there are exactly $\miez{cb_i^n h_{i+1}^{-n} }$ points $y\in E''_{i+1}$ with $B(y, b_{i+1}) \subset B(x, b_i)$. Thus
\beq\label{elemszam}
\# E''_k = \prod_{i=0}^{k-1} \miez{ c b_i^n h_{i+1}^{-n} }.
\eeq

Note that $3h_{i+1}\le b_i$, and $3h_1\le 1$.

Let $F_i=\overline{B(E''_i, b_i)}$ ($i=0, 1, \ldots$). These sets are finite unions of disjoint closed balls, and $F_0 \supset F_1 \supset \cdots$. Let $F=\bigcap F_i$. As $F\subset E$, it is enough to prove that $F$ has Hausdorff dimension at least $n/d$.
Let $F^k=\bigcap_{i=1}^k F_i$. Let $\lambda$ stand for the $n$-dimensional Lebesgue measure.

Note that $F_0$ is a ball of unit radius. By \eqref{elemszam},
\beq\label{legalabb}
\lambda(F^k)= b_k^n \lambda(F_0) \prod_{i=0}^{k-1} \miez{ c b_i^n h_{i+1}^{-n} }.
\eeq

Consider now a ball $B(x,r)$ with $r\le h_1/4$. Let $j$ be such that $h_{j+1}/4\le r\le h_j/4$. Then there exists at most one point $y\in E''_j$ for which $B(y,b_j)\cap B(x,r)\neq\emptyset$. Therefore there are at most $\miez{cb_j^n h_{j+1}^{-n}}$ points
$z\in E''_{j+1}$ for which $B(z,b_{j+1})\cap B(x,r)\neq\emptyset$. On the other hand, \eqref{eq-metszi} gives that there are at most $C2^n r^n h_{j+1}^{-n}$ points $z\in E''_{j+1}$ for which $B(z,b_{j+1})\cap B(x,r)\neq\emptyset$. From the definition of the sets $E''_i$ we obtain
%Therefore $\lambda(B(x,r)\cap F^j) \le \lambda(B(x,r)\cap B(y,b_j)) \le \min(r^n, b_j^n) \lambda(F_0)$. Using \eqref{eq-metszi} and then the properties of the sets $E''_i$,
\begin{align}\label{mdp1}
\lambda(B(x,r)\cap F^k) & \le  \#\{y\in E''_k : B(y,b_k)\cap B(x,r)\neq\emptyset \} \, b_k^n \lambda(F_0) \nonumber \\
& \le \min(\miez{cb_j^n h_{j+1}^{-n}}, \,C2^n r^n h_{j+1}^{-n}) \left( \prod_{i=j+1}^{k-1} \miez{c b_{i}^n h_{i+1}^{-n}} \right) b_k^n \lambda(F_0). 
\end{align}
for $k>j$. From \eqref{mdp1} and \eqref{legalabb},
\begin{align}
\frac{\lambda(B(x,r)\cap F^k)}{\lambda(F^k)} & \le
\min(\miez{cb_j^n h_{j+1}^{-n}}, \,C2^n r^n h_{j+1}^{-n}) 
\left(\prod_{i=1}^{j} \miez{cb_i^{n}h_{i+1}^{-n}}^{-1} \right)  \\
& \le \min(1, \, C2^n r^n h_{j+1}^{-n} \miez{cb_j^n h_{j+1}^{-n}}^{-1})
\left(\prod_{i=1}^{j-1} \miez{cb_i^{n}h_{i+1}^{-n}}^{-1}\right) \\
& \le \min(1, \, Cc^{-1} 2^n r^n b_j^{-n})
\left(\prod_{i=1}^{j-1} c^{-1} b_i^{-n}h_{i+1}^{n}\right) \\
& = \min(1, \, Cc^{-1} 2^n r^n b_j^{-n}) h_j^n
\left(\prod_{i=1}^{j-1} c^{-1} b_i^{-n}h_{i}^{n}\right) h_1^{-n} \\
& = \min(1, \, Cc^{-1} 2^n r^n b_j^{-n}) h_j^n c_{j-1}, \label{mdp2}
\end{align}
by seting $c_{j-1}=\left(\prod_{i=1}^{j-1} c^{-1} b_i^{-n}h_{i}^{n}\right) h_1^{-n}$.
Using $b_i\ge h_i^{d+1}/2$,
$$c_{j-1}\le c^{-j+1}\left(\prod_{i=1}^{j-1} 2^n h_i^{-dn}\right) h_1^{-n}=(2^n/c)^{j-1} h_1^{-n} \prod_{i=1}^{j-1} h_i^{-dn} \le C' \prod_{i=1}^{j-1} h_i^{-dn-1}$$
for some constant $C'$ depending on $c$. Therefore, by \eqref{tul2},
\beq\label{tul3}
c_{j-1}/C' \le \log h_j^{-1}
\eeq

First suppose that $h_j^d\le r \le h_j/4$. Then $c_{j-1}/C' \le \log  h_j^{-1} \le \log r^{-1}$, and \eqref{mdp2} implies
\begin{align}\label{egyik}
\frac{\lambda(B(x,r)\cap F^k)}{\lambda(F^k)} & \le
h_j^n c_{j-1} \le r^{n/d} c_{j-1} \le C' r^{n/d} \log r^{-1},
\end{align}
provided that $k>j$.

Now suppose that $h_{j+1}/4\le r \le h_j^d$. Then \eqref{mdp2} and \eqref{tul3} imply
\begin{align}
\frac{\lambda(B(x,r)\cap F^k)}{\lambda(F^k)} & \le
Cc^{-1} 2^n r^n b_j^{-n} h_j^n c_{j-1} \\
& \le Cc^{-1} 2^n c_{j-1}  2^n (\log h_j^{-1})^n h_j^{-dn} h_j^n  r^n\\
& \le Cc^{-1}4^n C' (\log h_j^{-1})^{n+1} h_j^{(1-d)n} r^n \\
& \le Cc^{-1}4^n C' (\log h_j^{-1})^{n+1} r^{n/d} \label{mekk}
\end{align}
as $r\le h_j^d$ and $d\ge 1$. Since $h_j^{-1} \le r^{-1/d}$, from \eqref{mekk} we obtain
\beq\label{masik}
\frac{\lambda(B(x,r)\cap F^k)}{\lambda(F^k)} \le Cc^{-1}4^n C' d^{-(n+1)} r^{n/d} (\log r^{-1})^{n+1},
\eeq
provided that $k>j$.
Combining \eqref{egyik} and \eqref{masik}, for every ball $B(x,r)$ with $r<h_1/4$, for $k$ large enough,
\beq\label{masik2}
\frac{\lambda(B(x,r)\cap F^k)}{\lambda(F^k)} \le C'' r^{n/d} (\log r^{-1})^{n+1},
\eeq
for some constant $C''$ (which depends only on $n$).

For each $k\ge 1$ we a define a Borel probability measure $\mu_k$ by setting $\mu_k(A)=\lambda(A\cap F^k) / \lambda(F^k)$ for every Borel set $A\subset \R^n$. Let $\mu$ be a weak limit of a convergent subsequence of $(\mu_k)$. Then $\mu$ is supported on $F$, and it satisfies $\mu(B(x,r))\le C'' r^{n/d} (\log r^{-1})^{n+1}$. The mass distribution principle implies that $F$ has Hausdorff dimension at least $n/d$. (In fact, we conclude that the Hausdorff measure of F corresponding to the gauge function $r\mapsto r^{n/d} (\log r^{-1})^{n+1}$ is positive.)
\end{proof}

\section{Open questions}\label{9}

The first question would be a natural strenghtening of Theorem~\ref{approxangles}.

\begin{question}\label{q1}
Is it true that for every $\alpha\in (0,\pi)$ there is a constant $c_\alpha<1$ such that for every $n\ge 2$, for every Borel set $E\subset \R^n$ with $\dim_H E >c_\alpha n$, $\alpha\in \iA(E)$?
\end{question}

Note that if $\dim_H E > n-1$ then $\iA(E)=[0,\pi]$, so Question~\ref{q1} is not relevant for  small $n$.

\begin{question}
Is there a constant $C$ such that every Borel set $E\subset\R^n$ of Hausdorff dimension $n/2+C$ contains every angle in $(0,\pi)$?
\end{question}

By Theorem~\ref{angledist}, $\iA(E)$ has positive Lebesgue measure if, for example, $C=4/3$. By Theorem~\ref{90}, there is a compact set  of dimension $n/2$ which does not contain the angle $\pi/2$.

\begin{question} Let $n\ge 2$.
Is it true that every Borel set $E\subset\R^n$ of Hausdorff dimension larger than $n/2$ contains the right angle? 
\end{question}

This is known to be true if $n$ is even, see \cite{hetes}.

\end{document}